\def \RR {\mathbb R}
\def \CC {\mathbb C}
\def \DD {\mathbb D}
\def \MBS {\mathbb S}
\def \eps {\varepsilon}
\newtheorem{theorem}{Theorem}[section]
\newtheorem{lemma}[theorem]{Lemma}
\newtheorem{proposition}[theorem]{Proposition}
\newtheorem{corollary}[theorem]{Corollary}
\def\myffrac#1#2 in #3{\raise 2.6pt\hbox{$#3 #1$}\mkern-1.5mu\raise 0.8pt\hbox{$
		#3/$}\mkern-1.1mu\lower 1.5pt\hbox{$#3 #2$}}
\def\qed{\hfill $\vcenter{\hrule height .3mm
		\hbox {\vrule width .3mm height 2.1mm \kern 2mm \vrule width .3mm
			height 2.1mm} \hrule height .3mm}$ \bigskip}
\begin{document}

\title{The bi-Lipschitz constant of an isothermal coordinate chart}
\author{Matan Eilat}
\date{}
\maketitle

\begin{abstract}
Let $M$ be a $C^{2}$-smooth Riemannian surface. A classical theorem in differential geometry states that the Gauss curvature function $K : M \to \RR$ vanishes everywhere if and only if the surface is locally isometric to the Euclidean plane.
We give an asymptotically sharp quantitative version of this theorem with respect to an isothermal coordinate chart.
Roughly speaking, we show that if $B$ is a Riemannian disc of radius $\delta > 0$ with $\delta^{2}\sup_{B}|K| < \eps$ for some $0 < \eps < 1$, then there is an isothermal coordinate map from $B$ onto an Euclidean disc of radius $\delta$ which is bi-Lipschitz with constant $\exp(4 \eps)$.
\end{abstract}

\section{Introduction}
Let $M$ be a $C^{2}$-smooth two-dimensional Riemannian manifold. 
Gauss's Theorema Egregium states that the Gauss curvature function $K : M \to \RR$ is invariant under local isometries. 
Hence a surface that is locally isometric to the Euclidean plane must have zero curvature. 
Following Gauss's work, Minding \cite{Mi} showed that all surfaces with the same constant curvature are locally isometric. 
The flat case of Minding's theorem therefore implies that if $K \equiv 0$ then $M$ is locally isometric to the Euclidean plane.
Here we give a quantitative version of this statement, where the Gauss curvature is not identically zero but say satisfies $|K| < \eps$ for a small $\eps > 0$.

\medskip
We consider an isothermal coordinate chart $z : \overline{B} \to \delta \overline{\DD}$. 
We write $B = B_{M}(p_{0},\delta)$ for the Riemannian disc of radius $\delta > 0$ around $p_{0} \in M$ and $\overline{B}$ for its closure.
We write $\delta \DD = B_{\CC}(0,\delta)$ for the Euclidean disc of radius $\delta$ around the origin and $\delta \overline{\DD}$ for its closure. For the boundary of $\delta \DD$ we will use the notation $\delta \MBS^{1}$. 
In these coordinates the metric takes the form
$$
g = \varphi \cdot |dz|^{2},
$$ 
where $\varphi$ is a positive function, referred to as the {\it conformal factor}. The value of the conformal factor at a point represents the infinitesimal scale of the coordinate map. More precisely, for a point $p \in \overline{B}$ we have
\begin{equation}
\sqrt{\varphi(p)} = \lim_{q \rightarrow p} \frac{d_{M}(p,q)}{|z(p) - z(q)|},
\label{eq_gamma}
\end{equation}
where $d_{M}$ is the Riemannian distance function in $M$, and $|\cdot|$ is the Euclidean norm in $\CC$.
Uniform bounds on the conformal factor therefore yield corresponding bounds for the bi-Lipschitz constant of the coordinate map and vice versa, i.e.
\begin{equation}
a \leq \frac{d_{M}(p,q)}{|z(p)-z(q)|} \leq b
\quad {\forall p \neq q \in B}
\quad \text{if and only if} \quad
a^{2} \leq \varphi(p) \leq b^{2} 
\quad \forall p\in B.
\label{eq_gamma_distance_equiv}
\end{equation}

\medskip
The main result of this paper is the following theorem, which gives uniform bounds for the conformal factor in terms of the quantity $\delta^{2} \sup_{B} \left| K \right|$. To the best of our knowledge, the mathematical literature does not contain explicit, quantitative estimates of this sort.

\begin{theorem}
Let $M$ be a $C^{2}$-smooth Riemannian surface, fix $p_{0} \in M$ and let $\delta > 0$. 
Suppose that at any point $p \in B = B_{M}(p_{0},\delta)$ the injectivity radius is at least $2\delta$ and $-\kappa \leq K(p) \leq \kappa$, where $\kappa > 0$ is a constant satisfying $\delta^{2}\kappa < \pi^{2} / 4$. 
Let $z : \overline{B} \to \delta \overline{\DD}$ be an isothermal coordinate chart such that $z(p_{0}) = 0$, $z(\partial{B}) = \delta \MBS^{1}$ and whose conformal factor is $\varphi$. Then
$$
\sup_{B} \left| \log \varphi \right| \leq 
\frac{\delta^{2}\kappa}{2}
\left[1  + \frac{\pi^{2}}{4} \cdot \left( \frac{\sinh(\delta \sqrt{\kappa}) \tan(\delta \sqrt{\kappa})}{\delta^{2}\kappa}\right)^{2} \right].
$$
\label{thm_main}
\end{theorem}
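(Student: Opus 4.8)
The plan is to reduce the theorem to an $L^{\infty}$-estimate for a single scalar PDE satisfied by the conformal factor, and then to control the two resulting contributions separately: the ``bulk'' contribution by potential theory on the disc, and the ``boundary'' contribution by comparison geometry. First I would set $u=\log\varphi$ and invoke the classical formula for the Gauss curvature in isothermal coordinates: for $g=\varphi\,|dz|^{2}$ one has $K=-\tfrac{1}{2\varphi}\Delta u$, where $\Delta=\partial_{xx}+\partial_{yy}$ is the Euclidean Laplacian on $\delta\DD$. Thus $u$ solves the Liouville-type equation
\[
\Delta u=-2K\,e^{u}\qquad\text{on }\delta\DD ,
\]
and the entire problem becomes: bound $\sup_{\delta\DD}|u|$ given $|K|\le\kappa$ together with the normalisation $z(p_{0})=0$, $z(\partial B)=\delta\MBS^{1}$.

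Next I would split $u=h+v$, where $G\ge 0$ is the Dirichlet Green's function of $\delta\DD$, $v(z)=2\int_{\delta\DD}G(z,\zeta)K(\zeta)\varphi(\zeta)\,dA(\zeta)$ is the Green potential of the source, and $h$ is the harmonic extension of the boundary trace $u|_{\delta\MBS^{1}}$. For the source term I would use $|K|\le\kappa$ together with $-\Delta\big(\tfrac{\delta^{2}-|z|^{2}}{4}\big)=1$ and the maximum principle to obtain $|v|\le 2\kappa\int_{\delta\DD}G(z,\cdot)\,\varphi\le\tfrac{\delta^{2}\kappa}{2}\sup_{\delta\DD}\varphi$. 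This is the origin of the leading factor $\tfrac{\delta^{2}\kappa}{2}$ and, after the self-improvement described in the last paragraph, of the ``$1$'' inside the bracket.

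For the harmonic term, the maximum principle gives $\sup_{\delta\DD}|h|\le\sup_{\delta\MBS^{1}}|u|$, so I must bound $\log\varphi$ on $\partial B$. Here the hypotheses enter. Since the injectivity radius is $\ge 2\delta$ and $\delta^{2}\kappa<\pi^{2}/4$ (equivalently $2\delta<\pi/\sqrt{\kappa}$), Rauch/Bishop--G\"unther comparison guarantees that geodesic polar coordinates around $p_{0}$ are nonsingular on $B$ and that the Jacobi density $f(r,\theta)$ obeys $\tfrac{\sin(\sqrt{\kappa}\,r)}{\sqrt{\kappa}}\le f\le \tfrac{\sinh(\sqrt{\kappa}\,r)}{\sqrt{\kappa}}$. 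These yield two-sided bounds for the length of the geodesic circle $\partial B$ and for the area $\int_{\delta\DD}\varphi\,dA=\area_{g}(B)$, in which $\sinh(\delta\sqrt{\kappa})$ (Bishop, upper) and $\sin(\delta\sqrt{\kappa})$ (G\"unther, lower) appear, the combination $\sin/\cos=\tan(\delta\sqrt{\kappa})$ emerging once the comparisons are integrated in $r$. Converting this integrated (essentially averaged) information into a \emph{pointwise} bound on $u|_{\partial B}$, and hence on $h$, is what should produce the second bracket term $\tfrac{\pi^{2}}{4}\big(\sinh(\delta\sqrt{\kappa})\tan(\delta\sqrt{\kappa})/\delta^{2}\kappa\big)^{2}$.

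Combining the two estimates gives a relation of the form $\sup|u|\le\tfrac{\delta^{2}\kappa}{2}\sup\varphi+\sup_{\partial B}|u|$, which is circular because $\sup\varphi=e^{\sup u}$ reappears on the right. I would break this circularity by a continuity/bootstrap argument in the parameter $\kappa$ (or in $\delta$): at $\kappa=0$ the normalisation forces $\varphi\equiv 1$, and as long as $\delta^{2}\kappa<\pi^{2}/4$ the a priori bound stays in a regime where the term $\tfrac{\delta^{2}\kappa}{2}e^{\sup u}$ can be reabsorbed, so the inequality closes with $\sup\varphi$ replaced by its limiting value and yields the stated constant. I expect the genuine difficulty to lie in the previous step: upgrading the comparison-geometry control of the total length and area of $B$ to a pointwise bound on the boundary trace of $\log\varphi$. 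This is precisely where the specific isothermal gauge (centre-to-centre, boundary-to-boundary) and the no-conjugate-point condition $\delta^{2}\kappa<\pi^{2}/4$ must be used in full, and where the factors $\tan(\delta\sqrt{\kappa})$ and $\pi^{2}/4$ should originate.
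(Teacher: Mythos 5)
Your skeleton --- Liouville's equation $\Delta u = -2K\varphi$ for $u=\log\varphi$, the split of $u$ into the harmonic extension of its boundary trace plus the Green potential of the source, and the bound $\int_{\delta\DD}G(z,\cdot)\,dA\le(\delta^{2}-|z|^{2})/4$ --- is exactly the paper's final assembly step (its Lemma \ref{lem_go_to_boundary}), so the outer structure is right. But both substantive inputs are missing, and the first one fails as stated. For the pointwise bound on $\log\varphi\big|_{\partial B}$ you propose to upgrade \emph{integrated} comparison-geometry data (length of $\partial B$, area of $B$) to a pointwise estimate; this cannot work, because such quantities only control averages: the length of $\partial B$ fixes $\int_{\delta\MBS^{1}}\sqrt{\varphi}$, and $\varphi$ can spike on a short boundary arc without violating any length or area comparison. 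The paper's mechanism is genuinely pointwise and different: it compares the Green's function $G=-\log|w|$ with the explicit model barriers $S_{p_{0}}=-\log\tan\bigl(d_{M}(\cdot,p_{0})\sqrt{\kappa}/2\bigr)$ and $H_{p_{0}}=-\log\tanh\bigl(d_{M}(\cdot,p_{0})\sqrt{\kappa}/2\bigr)$, which are super-/subharmonic (Proposition \ref{prop_barriers_properties}) and agree with $G$ on $\partial B$ up to explicit constants; since $\lambda=\Vert\nabla G\Vert^{-2}$ on $\partial B$, the resulting normal-derivative comparison gives the two-sided pointwise bound, which after Lemma \ref{lem_some_estimates} reads $\sup_{\partial B}|\log\varphi|\le\delta^{2}\kappa/2$ (Proposition \ref{prop_2140}). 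Note also that your accounting is reversed: in the paper this boundary estimate produces the ``$1$'' in the bracket, while the bulk term produces the $\frac{\pi^{2}}{4}(\cdots)^{2}$ factor.

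The second gap is the circularity. Your bulk bound is $\frac{\delta^{2}\kappa}{2}e^{\sup u}$, to be ``reabsorbed'' by a continuity argument in $\delta$ or $\kappa$. This cannot close on the stated range: once $\frac{\delta^{2}\kappa}{2}>1/e$ the inequality $A\le\frac{\delta^{2}\kappa}{2}e^{A}+b$ is satisfied by every sufficiently large $A$ and the map $A\mapsto\frac{\delta^{2}\kappa}{2}e^{A}+b$ has no fixed point, so there is no threshold for a continuity/openness-closedness argument to run against --- and the theorem permits $\delta^{2}\kappa$ up to $\pi^{2}/4\approx 2.47$. (Moreover there is no one-parameter family of surfaces interpolating in $\kappa$; shrinking the radius is legitimate, but the same obstruction reappears at the target radius.) The paper avoids the circularity entirely: instead of estimating $\sup\varphi$ by $e^{\sup|u|}$, it proves the \emph{independent} pointwise bound $\sup_{B}\varphi\le\frac{\pi^{2}}{4}\bigl(\sinh(\delta\sqrt{\kappa})\tan(\delta\sqrt{\kappa})/(\delta^{2}\kappa)\bigr)^{2}$ via a global bi-Lipschitz estimate (Proposition \ref{prop_distance_ratio}). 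That estimate is the technical core you leave open: a two-stage maximum-principle argument in which the barriers $S_{q_{0}},H_{q_{0}}$ are first anchored at boundary points --- calibrated by the boundary chordal-distance bound of Corollary \ref{cor_2331}, whose proof requires the $\mathrm{CAT}(\kappa)$ triangle comparison of Lemma \ref{lem_ratio_angle_arc} --- and then re-anchored at interior points using the symmetry $H_{q}(p)=H_{p}(q)$, $S_{q}(p)=S_{p}(q)$. It is this distance-ratio bound, rather than any bootstrap, that makes the final inequality non-circular.
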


\medskip
Observe that the term in the inner brackets is increasing as a function of $\delta^{2}\kappa$, tends to $1$ as $\delta^{2}\kappa \searrow 0$ and tends to infinity as $\delta^{2}\kappa \nearrow \pi^{2}/4$.
In case $\delta^{2}\kappa$ is assumed to be bounded away from $\pi^{2}/4$, the estimate may be expressed as $C \cdot \delta^{2}\kappa$ for some universal constant $C$.
We thus formulate the following corollary for simplicity and ease of use.
Note that the second part of the corollary follows from the first part together with (\ref{eq_gamma_distance_equiv}). 

\begin{corollary}
Under the assumptions of Theorem \ref{thm_main} together with the additional assumption that $\delta^{2}\kappa < \pi^{2} / 8$, we have that 
$$
\sup_{B} \left| \log \varphi \right| \leq 8 \delta^{2}\kappa,
$$
and for any two distinct points $p, q \in B$ we have
$$
\exp(-4\delta^{2}\kappa)
\leq \frac{d_{M}(p,q)}{| z(p) - z(q)|} \leq 
\exp(4\delta^{2}\kappa).
$$
\label{cor_main_easy}
\end{corollary}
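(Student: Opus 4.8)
The plan is to obtain the corollary directly from the estimate of Theorem~\ref{thm_main}, so no new geometric input is needed; the entire argument is a one-variable monotonicity-plus-numerics check followed by an application of the equivalence (\ref{eq_gamma_distance_equiv}). First I would set $t = \delta\sqrt{\kappa}$, so that the hypothesis $\delta^{2}\kappa < \pi^{2}/8$ becomes $0 < t < \pi/(2\sqrt{2})$, and the conclusion of Theorem~\ref{thm_main} reads
$$
\sup_{B}|\log\varphi| \le \frac{t^{2}}{2}\,B(t), \qquad B(t) := 1 + \frac{\pi^{2}}{4}\left(\frac{\sinh t\,\tan t}{t^{2}}\right)^{2}.
$$
Since $\delta^{2}\kappa = t^{2}$, proving the first assertion $\sup_{B}|\log\varphi|\le 8\delta^{2}\kappa$ reduces to showing $\tfrac12 B(t)\le 8$, equivalently $B(t)\le 16$, for all $t$ in this range.

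Next I would exploit monotonicity to replace the range by its right endpoint. The factor $\sinh t/t = \sum_{k\ge 0} t^{2k}/(2k+1)!$ and the factor $\tan t/t = 1 + t^{2}/3 + 2t^{4}/15 + \cdots$ are both positive and strictly increasing on $(0,\pi/2)$, hence so is their product $\sinh t\,\tan t/t^{2}$, and therefore $B$ is increasing in $t$; this is exactly the observation recorded after Theorem~\ref{thm_main}. Because $\pi/(2\sqrt 2)\approx 1.11<\pi/2$, the function $\tan$ stays finite throughout, so it suffices to bound $B$ at $t=\pi/(2\sqrt 2)$, i.e.\ at $t^{2}=\pi^{2}/8$. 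Plugging in rigorous upper bounds for $\sinh$ and $\tan$ at this point gives $\sinh t\,\tan t/t^{2}\approx 2.21$ and hence $B(t)\approx 13.1$, comfortably below $16$; thus $\tfrac12 B(t)\le 8$ on the whole range and the first bound follows (in fact with the constant $8$ replaceable by about $6.6$).

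Finally, the distance comparison is immediate from the first bound and (\ref{eq_gamma_distance_equiv}). From $\sup_{B}|\log\varphi|\le 8\delta^{2}\kappa$ we get $e^{-8\delta^{2}\kappa}\le\varphi\le e^{8\delta^{2}\kappa}$ on $B$, so applying the equivalence with $a=\exp(-4\delta^{2}\kappa)$ and $b=\exp(4\delta^{2}\kappa)$ — for which $a^{2}=e^{-8\delta^{2}\kappa}$ and $b^{2}=e^{8\delta^{2}\kappa}$ exactly match these bounds — yields $\exp(-4\delta^{2}\kappa)\le d_{M}(p,q)/|z(p)-z(q)|\le\exp(4\delta^{2}\kappa)$ for all distinct $p,q\in B$. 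The only place demanding care is the numerical verification at the endpoint: one must bound the transcendental factors from above rather than merely estimate them, but the generous gap between $13.1$ and $16$ makes this routine, so I expect no genuine obstacle.
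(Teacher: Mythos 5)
Your proposal is correct and takes essentially the same route as the paper, which proves the corollary only by the remark that the bracketed factor in Theorem \ref{thm_main} is increasing in $\delta^{2}\kappa$ (so it suffices to check it at $\delta^{2}\kappa = \pi^{2}/8$, where it is about $13.1 \le 16$) together with the observation that the distance bounds follow from the first bound via (\ref{eq_gamma_distance_equiv}). Your endpoint evaluation and the matching $a^{2} = e^{-8\delta^{2}\kappa}$, $b^{2} = e^{8\delta^{2}\kappa}$ in the equivalence simply make explicit what the paper leaves as a one-line note, and the numerics check out.
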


\medskip
The isothermal coordinate chart has several advantages over other coordinate charts, for example in terms of regularity.
As a two-dimensional special case of harmonic coordinates, the metric in this chart is as smooth as it can be. This is according to Deturck and Kazdan \cite{DK}, who showed that if a metric is of class $C^{k,\alpha}$ in some coordinate chart then it is also of class $C^{k,\alpha}$ in harmonic coordinates. 
However, this property does not hold for other coordinate charts. According to \cite{DK} the metric in normal coordinates is guaranteed to be only of class $C^{k-2, \alpha}$. In addition they give an example of a metric that is of class $C^{k,\alpha}$ in harmonic coordinates but not of class $C^{k-1}$ in normal coordinates.

\medskip
The bi-Lipschitz constant derived in Corollary \ref{cor_main_easy} for the isothermal coordinate chart is optimal, up to some multiplicative constant in the exponent. 
Indeed, Milnor \cite{Mil} showed that in the case of a spherical cap, the normal coordinate chart has the minimal ``distortion'', which behaves asymptotically like $\delta^{2}\kappa / 6$ for small values of $\delta^{2}\kappa$.
The distortion of a coordinate chart is defined as the logarithm of the ratio between the maximal scale and the minimal scale, where the maximal scale is given by the supremum of the ratio between the Euclidean distance under the coordinate chart and the Riemannian distance in the surface, and the minimal scale is given by the infimum. 
Hence we see that our estimates are asymptotically sharp, and there is essentially no disadvantage in considering an isothermal coordinate chart in the aspect of local metric distortion.

\medskip
The remainder of this paper is devoted to the proof of Theorem \ref{thm_main}, and from now on we work under its assumptions.
We let $M$ be a $C^{2}$-smooth Riemannian surface, fix a point $p_{0}\in M$ and $\delta > 0$, and consider the Riemannian disc $B = B_{M}(p_{0},\delta)$.
We assume that the injectivity radius of all points of $B$ is at least $2\delta$, and that the Gauss curvature function $K : M \to \RR$ satisfies $-\kappa \leq K \big|_{B} \leq \kappa$ for some constant $\kappa > 0$ such that $\delta^{2}\kappa < \pi^{2} / 4$.

\medskip
{\it Acknowledgements.} I would like to express my deep gratitude to my advisor, Prof. Bo'az Klartag, for his continuous guidance, assistance and encouragement.
Supported by the Adams Fellowship Program of the Israel Academy of Sciences and Humanities and the Israel Science Foundation (ISF).

\section{Preliminaries}
\label{sec_2}

\medskip
A set $U \subseteq M$ is called strongly convex if its closure $\overline{U}$ has the property that for any $p,q \in \overline{U}$ there is a unique minimizing geodesic in $M$ from $p$ to $q$, and the interior of this geodesic is contained in $U$. 
Under our assumption that the injectivity radius at each point of the Riemannian disc $B$ is at least $2\delta$ and furthermore
$\delta^{2} \kappa < \pi^{2} / 4$, we have that $B$ is strongly convex according to the Whitehead theorem \cite{Wh}.

\medskip
Since $M$ is a $C^{2}$-smooth Riemannian surface, around any point $p \in M$ there exists an isothermal coordinate chart, i.e. there is
neighborhood $U$ containing $p$ and a coordinate chart $w = x + iy : U \to \CC$, such that the metric in these coordinates is of the form
$$
g = \lambda \left| dw \right|^{2} = \lambda ( dx^{2}+dy^{2} ),
$$
where $\lambda$ is a positive function which is called the conformal factor. 
Thanks to Deturck and Kazdan \cite{DK} we know that $\lambda$ is a $C^{2}$-smooth function.
The Gauss curvature function $K$ is given by Liouville's equation
\begin{equation}
K = -\frac{\Delta \left( \log \lambda \right)}{2\lambda},
\label{eq_curvature_formula}
\end{equation}
where $\Delta$ is the usual Laplace operator with respect to the coordinate map $w = x+iy$.
For the Laplace-Beltrami operator we use the notation $\Delta_{M}$. Under the isothermal coordinate chart we have 
\begin{equation}
\Delta_{M} = \lambda^{-1}\Delta.
\label{eq_deltas}
\end{equation}

\medskip
An upper semi-continuous function $u: M \to \RR \cup \{ -\infty \}$ is {\it subharmonic} if each $p \in M$ belongs to an isothermal coordinate chart in which $u$ is subharmonic. 
The function $u$ is called superharmonic if $-u$ is subharmonic. A function that is both subharmonic and superharmonic is harmonic. 
Since the conformal factor $\lambda$ is positive, it follows from (\ref{eq_deltas}) that a smooth function $u : M \to \RR$ is subharmonic if and only if $\Delta_{M}u \geq 0$. 
The {\it maximum principle} for subharmonic functions states that for a bounded, connected, open set $\Omega \subseteq \CC$ and a subharmonic function $u$ on $\Omega$, if $\limsup u(z) \leq 0$ as $z \rightarrow \partial{\Omega}$ then $u(z) \leq 0$ for all $z \in \Omega$.
See \cite{Ga, Hay} for more details.

\medskip
The uniformization theorem states that every simply-connected Riemann surface is conformally equivalent to a disc, to the complex plane, or to the Riemann sphere. See \cite{Ah} for information.
It follows from the uniformization theorem that the Riemannian disc $B$ is conformally equivalent to the unit disc $\DD$.
To see this, fix $p \in \overline{B}$ and consider a normal polar coordinate chart $(\rho,\theta): \overline{B} \setminus \{p\} \to (0,2\delta] \times [0,2\pi)$. The metric in these coordinates is given by 
$$
g = d\rho^{2} + \phi^{2}d\theta^{2},
$$
where $\phi$ is a positive function satisfying the Jacobi equation. In particular, if we fix $\theta \in [0,2\pi)$ and define $\phi_{\theta}(\rho) = \phi(\rho, \theta)$ then
$$
\lim_{\rho \rightarrow 0} \phi_{\theta}(\rho) = 0, 
\qquad
\lim_{\rho \rightarrow 0}\phi_{\theta}'(\rho) = 1
\qquad \text{and} \qquad
\phi_{\theta}''(\rho) + K(\rho, \theta)\cdot\phi_{\theta}(\rho) = 0.
$$
The Laplacian of the distance function $r$ from $p$ is given by
$ \Delta_{M} r(\rho,\theta) = \phi_{\theta}'(\rho) / \phi_{\theta}(\rho)$.
It follows from a well-known comparison argument that under our assumptions we have 
\begin{equation}
\frac{\sin(\rho \sqrt{\kappa})}{\sqrt{\kappa}} \leq
\phi(\rho,\theta) \leq 
\frac{\sinh(\rho \sqrt{\kappa})}{\sqrt{\kappa}},
\label{eq_1856}
\end{equation}
and
\begin{equation}
\sqrt{\kappa} \cot(\rho \sqrt{\kappa})
\leq \Delta_{M} r(\rho, \theta) \leq 
\sqrt{\kappa} \coth(\rho \sqrt{\kappa}).
\label{eq_distance_laplacian_comparison}
\end{equation}
See \cite{Pet} for more details and comparison results.
As a consequence of (\ref{eq_distance_laplacian_comparison}) we obtain that the distance function from the center $p_{0}$ of the disc $B$ is subharmonic. Moreover, this function is clearly non-constant and bounded from above.  According to Liouville's theorem, such a function does not exist on the entire complex plane $\CC$. 
Since $B$ is not compact and therefore cannot be conformally equivalent to the sphere, we conclude that $B$ must be conformally equivalent to the disc $\DD$.
The conformal equivalence between $B$ and $\DD$ may be given by an isothermal coordinate chart 
$$
w = x+iy : \overline{B} \rightarrow \overline{\DD}
\qquad  \text{such that} \quad
w(p_{0}) = 0
\quad \text{and} \quad 
w(\partial{B}) = \MBS^{1}.
$$
To explain the boundary behavior, consider a slightly larger Riemannian disc $B_{\eps} := B_{M}(p_{0}, \delta + \eps)$. For a sufficiently small $\eps > 0$, the Riemannian disc $B_{\eps}$ will also be conformally equivalent to the unit disc $\DD$ via some mapping $w_{\eps}: B_{\eps} \to \DD$ with $w(p_{0}) = 0$. 
The set $U := w_{\eps}(B) \subseteq \DD$ is a simply-connected open set.
Hence by the Riemann mapping theorem there exists a conformal mapping $f : U \to \DD$ with $f(0) = 0$. Moreover, the set $U$ has a $C^{\infty}$-smooth boundary, being the level set of the regular value $\delta$ of the Riemannian distance function from $p_{0}$.
Hence the map $f$ extends to a diffeomorphism between $\overline{U}$ and $\overline{\DD}$, due to Painlev\'{e} \cite{Pai} and Kellogg \cite{Kel}. Thus $w := f \circ w_{\eps}|_{B}$ extends continuously to a function on the closure $\overline{B}$.
Moreover, if $\lambda$ and $\lambda_{\eps}$ are the conformal factors associated with the coordinate charts $w$ and $w_{\eps}$ respectively, then $\lambda = |(f^{-1})'|^{2} \cdot (\lambda_{\eps}|_{U} \circ f^{-1})$  and we see that $\lambda$ extends continuously to a function on $\overline{\DD}$.

\medskip
To avoid confusion, we will stick to the following notation throughout:
the coordinate map $z: \overline{B} \to \delta \overline{\DD}$ will be used for isothermal coordinates with $z(p_{0})=0$ and $z(\partial{B})=\delta \MBS^{1}$, as described in Theorem \ref{thm_main}. The corresponding conformal factor will be denoted by $\varphi$.
The coordinate map $w = \delta^{-1}z: \overline{B} \to \overline{\DD} $ will be used for isothermal coordinates with $w(p_{0}) = 0$ and $w(\partial{B}) = \MBS^{1}$, and the corresponding conformal factor will be denoted by $\lambda$.
We thus have the relations
$$
z = \delta w 
\qquad \text{and} \qquad
\varphi(z) = \delta^{-2} \lambda(w) .
$$

\medskip
The barriers we introduce in Section \ref{sec_3} are defined on a punctured disc, with a logarithmic singularity at the point in which they are not defined.
Let $p \in \overline{B}$ and suppose that $u: \overline{B} \setminus \{p\} \to \RR$ is a continuous function that is subharmonic on $B \setminus \{p\}$.
We say that $u$ has a {\it logarithmic pole} at $p$ if 
$$
\limsup_{q \to p} \left( u(q) + \log |w(p) - w(q)| \right) < \infty.
$$
In this case the function $u(q) + \log |w(p) - w(q)|$ extends uniquely to a subharmonic function on $B$.
When we use the maximum principle for a function of this sort, we mean that we apply it with respect to the above-mentioned extension. Note that every assertion on subharmonic functions, e.g. the maximum principle, naturally gives rise to a symmetric assertion on superharmonic functions.

\medskip
The Riemannian disc $B$ is a $\text{CAT}(\kappa)$ space, as it is strongly convex and the Gauss curvature function in $B$ is bounded from above by $\kappa$.
According to Alexandrov \cite{Alek} geodesic triangles in a $\text{CAT}(\kappa)$ space satisfy certain comparison properties with respect to corresponding geodesic triangles in the model space of constant curvature $\kappa$.
See \cite{Alex, BH} for information about $\text{CAT}(k)$ spaces.
One such comparison property that we will use later is stated below.
Write $S_{\kappa}$ for the two-dimensional sphere of radius $1/\sqrt{\kappa}$, and write $d_{\kappa}$ for the induced metric on $S_{\kappa}$.
For $p,q,r \in \overline{B}$ we write $\Delta = \Delta(p,q,r) \subseteq \overline{B}$ for the geodesic triangle whose edges are the minimizing geodesic segments $[p,q],[q,r],[r,p]$ connecting their corresponding endpoints.
We let $L(\Delta) = d_{M}(p,q) + d_{M}(q,r) + d_{M}(p,r)$ denote its perimeter.

\begin{proposition}
For any geodesic triangle $\Delta = \Delta(p,q,r) \subseteq \overline{B}$ with perimeter $L(\Delta) < 2\pi / \sqrt{\kappa}$, with $p \neq q$ and $p \neq r$, if $\theta$ denotes the angle between $[p,q]$ and $[p,r]$ at $p$, and if $\Delta_{0} = \Delta(p_{0},q_{0},r_{0})$ is a geodesic triangle in $S_{\kappa}$ with 
$ d_{\kappa}(p_{0},q_{0}) = d_{M}(p,q) $,
$ d_{\kappa}(p_{0},r_{0}) = d_{M}(p,r) $,
and angle at $p_{0}$ equal to $\theta$, then $d_{M}(q,r) \geq d_{\kappa}(q_{0},r_{0})$.
\label{prop_cat}
\end{proposition}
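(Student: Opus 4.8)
The plan is to deduce this statement from the standard angle-comparison property of CAT$(\kappa)$ spaces together with the monotonicity of the spherical law of cosines. Recall that $B$ is a CAT$(\kappa)$ space, and that its strong convexity guarantees that the geodesic triangle $\Delta(p,q,r)$ is well defined and that the Alexandrov angle $\theta$ at $p$ exists. The idea is to introduce an auxiliary comparison triangle $\overline{\Delta}$ in $S_{\kappa}$ having the \emph{same three side lengths} as $\Delta$, and then to play off the angle comparison between $\Delta$ and $\overline{\Delta}$ against a law-of-cosines comparison between $\overline{\Delta}$ and $\Delta_{0}$.

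First I would construct the comparison triangle $\overline{\Delta} = \Delta(\overline{p}, \overline{q}, \overline{r}) \subseteq S_{\kappa}$ whose side lengths match those of $\Delta$ exactly, i.e. $d_{\kappa}(\overline{p}, \overline{q}) = d_{M}(p,q)$, $d_{\kappa}(\overline{p}, \overline{r}) = d_{M}(p,r)$ and $d_{\kappa}(\overline{q}, \overline{r}) = d_{M}(q,r)$. Such a triangle exists precisely because the perimeter satisfies $L(\Delta) < 2\pi/\sqrt{\kappa}$, which together with the triangle inequality is the condition for three prescribed lengths to be realizable as a geodesic triangle on $S_{\kappa}$. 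Let $\overline{\theta}$ denote the angle of $\overline{\Delta}$ at $\overline{p}$. By Alexandrov's angle comparison theorem for CAT$(\kappa)$ spaces (see \cite{Alek, Alex, BH}), the angle of a geodesic triangle at a vertex is no larger than the corresponding angle of its comparison triangle in the model space, hence $\theta \leq \overline{\theta}$.

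It remains to compare $\overline{\Delta}$ with $\Delta_{0}$. Both triangles lie in $S_{\kappa}$ and share the two sides adjacent to the apex, since $d_{\kappa}(\overline{p}, \overline{q}) = d_{M}(p,q) = d_{\kappa}(p_{0}, q_{0})$ and $d_{\kappa}(\overline{p}, \overline{r}) = d_{M}(p,r) = d_{\kappa}(p_{0}, r_{0})$; they differ only in the included angle, which is $\overline{\theta}$ for $\overline{\Delta}$ and $\theta$ for $\Delta_{0}$. The spherical law of cosines on the sphere of radius $1/\sqrt{\kappa}$ reads
$$
\cos\!\bigl(\sqrt{\kappa}\, d_{\kappa}(q_{0},r_{0})\bigr) = \cos\!\bigl(\sqrt{\kappa}\, d_{M}(p,q)\bigr)\cos\!\bigl(\sqrt{\kappa}\, d_{M}(p,r)\bigr) + \sin\!\bigl(\sqrt{\kappa}\, d_{M}(p,q)\bigr)\sin\!\bigl(\sqrt{\kappa}\, d_{M}(p,r)\bigr)\cos\theta,
$$
and an identical identity holds for $\overline{\Delta}$ with $\theta$ replaced by $\overline{\theta}$. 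Since the adjacent-side factors are positive, the right-hand side is strictly decreasing in the included angle, so the length of the opposite side is strictly increasing in that angle. Applying this with $\theta \leq \overline{\theta}$ gives $d_{\kappa}(q_{0},r_{0}) \leq d_{\kappa}(\overline{q},\overline{r}) = d_{M}(q,r)$, which is the desired inequality.

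The main point requiring care is to verify that all the arc lengths remain in the regime $(0, \pi/\sqrt{\kappa})$ where $\cos$ is invertible and the two sine factors are positive, so that the monotonicity above is genuine. This is secured by the standing assumptions: any two points of $\overline{B}$ lie within distance $2\delta$ of one another, and $2\delta\sqrt{\kappa} < \pi$ because $\delta^{2}\kappa < \pi^{2}/4$, so each side length is strictly less than $\pi/\sqrt{\kappa}$; the hypothesis $L(\Delta) < 2\pi/\sqrt{\kappa}$ additionally guarantees the existence of the comparison triangle $\overline{\Delta}$. I expect this bookkeeping of the admissible range, rather than the comparison principle itself, to be the only delicate part of the argument.
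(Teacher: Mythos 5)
Your proof is correct, but it is worth noting that the paper itself does not prove this proposition at all: it is stated as a known comparison property of $\mathrm{CAT}(\kappa)$ spaces, with the burden carried entirely by the citations to Alexandrov and to the books of Alexander--Kapovitch--Petrunin and Bridson--Haefliger. What you have written is the standard derivation of the \emph{hinge comparison} from the \emph{angle comparison}: build the side-length comparison triangle $\overline{\Delta}$ in $S_{\kappa}$ (legitimate since the perimeter is below $2\pi/\sqrt{\kappa}$ and the side lengths obey the triangle inequality), invoke $\theta \leq \overline{\theta}$, and convert the angle inequality into a side inequality via the monotonicity of the spherical law of cosines in the included angle -- your bookkeeping that all sides lie in $(0,\pi/\sqrt{\kappa})$ (because any two points of $\overline{B}$ are within $2\delta < \pi/\sqrt{\kappa}$ of each other) is exactly what makes that monotonicity valid, and the degenerate case $q=r$ goes through as well. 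The one caveat is that your argument is not self-contained in a stronger sense than the paper's: you still take the Alexandrov angle comparison for $\mathrm{CAT}(\kappa)$ spaces as a black box, which is a statement of essentially the same depth as the one being proved (the two are standard equivalent formulations of the $\mathrm{CAT}(\kappa)$ condition). So the net effect is a clean reduction of the paper's cited proposition to a different, equally standard, cited proposition -- a perfectly acceptable and instructive route, and one that makes explicit the law-of-cosines mechanism that the paper reuses later in Lemma \ref{lem_ratio_angle_arc}, but not a replacement for the underlying comparison theory.
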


\section{The harmonic barriers}
\label{sec_3}

Given the conformal equivalence $w : B \to \DD$ we define Green's function $G : B \setminus \{p_{0}\} \to \RR$ by $G = -\log |w|$. Observe that $G$ is a positive function with $G(p) \longrightarrow 0$ as $p \to \partial B$, and $G(p) \longrightarrow \infty$ as $p \to p_{0}$.
Moreover, the function $G$ is harmonic on $B \setminus \{ p_{0} \}$ with a logarithmic pole at $p_{0}$.
Our choice of harmonic barriers is motivated by Green's functions in the extremal cases, which are the surfaces of constant curvature $\kappa$ and $-\kappa$.
For a point $p \in \overline{B}$ we define the functions $S_{p}, H_{p}: \overline{B}\setminus \{p \} \to \RR$ as follows:
$$
S_{p}(q) = 
-\log \left( \tan \left( \frac{d_{M}(q,p)\sqrt{\kappa}}{2} \right)
\right)
\quad \text{and} \quad
H_{p}(q) = 
-\log \left( \tanh \left( \frac{d_{M}(q,p)\sqrt{\kappa}}{2} \right)
\right).
$$
Under our assumption that $\delta^{2}\kappa < \pi^{2} / 4$ we see that these functions are well-defined.
The relevant properties of $S_{p}$ and $H_{p}$ are summarized in the following proposition.

\begin{proposition}
For any $p \in \overline{B}$ we have that:
\begin{itemize}
\item[(i)] $S_{p}$ is superharmonic on $ B \setminus \{ p \}$ and $H_{p}$ is subharmonic on $ B \setminus \{ p \}$.
\item[(ii)] Both $S_{p}$ and $H_{p}$ are monotonically decreasing with the distance from $p$.
\item[(iii)] Both $S_{p}$ and $H_{p}$ have a logarithmic pole at $p$.
\end{itemize}
\label{prop_barriers_properties}
\end{proposition}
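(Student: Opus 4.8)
The plan is to treat both $S_{p}$ and $H_{p}$ as radial functions of the Riemannian distance $r(q) = d_{M}(q,p)$ from the pole $p$, and to reduce all three assertions to one-variable computations combined with the distance-Laplacian comparison (\ref{eq_distance_laplacian_comparison}). The first point to record is that $r$ is smooth on $B \setminus \{p\}$: since $B$ is strongly convex and the injectivity radius at $p$ is at least $2\delta$, while any two points of $\overline{B}$ lie at distance at most $2\delta$, the cut locus of $p$ does not meet $\overline{B}$, so normal polar coordinates centered at $p$ cover $B \setminus \{p\}$ and $r$ is as regular there as the metric. For any smooth radial function $F(r)$ one then has the standard identity $\Delta_{M} F = F''(r) + F'(r)\,\Delta_{M} r$, which I would use throughout; moreover the comparison (\ref{eq_distance_laplacian_comparison}) applies verbatim with $\rho$ replaced by $r = d_{M}(\cdot,p)$ for the present pole $p \in \overline{B}$.

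For part (i) I would simply differentiate the profiles. Writing $S_{p} = f(r)$ with $f(t) = -\log\tan(t\sqrt{\kappa}/2)$, a short computation gives $f'(t) = -\sqrt{\kappa}/\sin(t\sqrt{\kappa})$ and $f''(t) = \kappa\cot(t\sqrt{\kappa})/\sin(t\sqrt{\kappa})$, whence
$$
\Delta_{M} S_{p} = \frac{\sqrt{\kappa}}{\sin(r\sqrt{\kappa})}\Big(\sqrt{\kappa}\cot(r\sqrt{\kappa}) - \Delta_{M} r\Big).
$$
Since $0 < r\sqrt{\kappa} < \pi$ the prefactor is positive, so the lower bound $\Delta_{M} r \geq \sqrt{\kappa}\cot(r\sqrt{\kappa})$ from (\ref{eq_distance_laplacian_comparison}) forces $\Delta_{M} S_{p} \leq 0$, i.e. $S_{p}$ is superharmonic. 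The computation for $H_{p} = h(r)$, $h(t) = -\log\tanh(t\sqrt{\kappa}/2)$, is identical with hyperbolic functions: one finds $h'(t) = -\sqrt{\kappa}/\sinh(t\sqrt{\kappa})$ and
$$
\Delta_{M} H_{p} = \frac{\sqrt{\kappa}}{\sinh(r\sqrt{\kappa})}\Big(\sqrt{\kappa}\coth(r\sqrt{\kappa}) - \Delta_{M} r\Big),
$$
so that the upper bound $\Delta_{M} r \leq \sqrt{\kappa}\coth(r\sqrt{\kappa})$ gives $\Delta_{M} H_{p} \geq 0$, i.e. $H_{p}$ is subharmonic. The pleasant point is that the two comparison inequalities in (\ref{eq_distance_laplacian_comparison}) line up exactly with the two profiles, each pinning down the sign in the right direction.

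Part (ii) is then immediate, since $f' < 0$ and $h' < 0$ on the relevant range: the constraint $\delta^{2}\kappa < \pi^{2}/4$ together with $r \leq 2\delta$ keeps $r\sqrt{\kappa}/2 < \pi/2$, so $\tan$ is positive and increasing and $\tanh$ likewise, and both functions strictly decrease with $r$. For part (iii) I would use the near-pole asymptotics $\tan(t\sqrt{\kappa}/2) \sim t\sqrt{\kappa}/2$ and $\tanh(t\sqrt{\kappa}/2) \sim t\sqrt{\kappa}/2$, which give $S_{p}(q), H_{p}(q) = -\log r - \log(\sqrt{\kappa}/2) + o(1)$ as $q \to p$. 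Combining this with the defining relation (\ref{eq_gamma}) of the conformal factor, which yields $\log r = \log|w(p)-w(q)| + \tfrac12\log\lambda(w(p)) + o(1)$, the quantity $S_{p}(q) + \log|w(p)-w(q)|$ converges to the finite limit $-\tfrac12\log\lambda(w(p)) - \log(\sqrt{\kappa}/2)$, and likewise for $H_{p}$; in particular the limsup is finite, which is the logarithmic-pole condition. I expect no serious obstacle here: the only genuine care needed is the preliminary justification that $r$ is smooth away from $p$, so that the radial Laplacian identity holds on all of $B \setminus \{p\}$, and the bookkeeping of matching each comparison inequality in (\ref{eq_distance_laplacian_comparison}) to the correct profile in part (i); everything else is a one-variable differentiation and an elementary asymptotic.
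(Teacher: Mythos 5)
Your proposal is correct and follows essentially the same route as the paper's own proof: differentiating the radial profiles, invoking the Laplacian comparison (\ref{eq_distance_laplacian_comparison}) to pin down the signs of $\Delta_{M} S_{p}$ and $\Delta_{M} H_{p}$, and combining the near-pole asymptotics of $\tan$ and $\tanh$ with (\ref{eq_gamma}) for the logarithmic pole, with your limit $-\tfrac12\log\lambda(p) - \log(\sqrt{\kappa}/2)$ agreeing with the paper's $-\tfrac12\log(\kappa\lambda(p)/4)$. The only difference is that you make explicit two points the paper leaves implicit, namely the smoothness of $r$ on $B \setminus \{p\}$ via the injectivity-radius hypothesis and the radial Laplacian identity $\Delta_{M}F = F''(r) + F'(r)\,\Delta_{M} r$, which is a harmless (indeed welcome) addition.
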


\begin{proof} 
Write $r:\overline{B} \rightarrow \RR $ for the distance function from $p$. Differentiating $S_{p}$ and $H_{p}$ with respect to $r$ gives $\partial_{r} S_{p} =  -\sqrt{\kappa}/\sin(r \sqrt{\kappa})$ and $\partial_{r} H_{p} = -\sqrt{\kappa}/\sinh(r\sqrt{\kappa})$.
Since $0 < r \leq 2\delta$ and $\delta \sqrt{\kappa} < \pi / 2$ we see that (ii) holds true. Differentiating once more and using the chain rule we obtain that
$$
\Delta_{M} S_{p} =
\frac{\kappa \cot(r\sqrt{\kappa})}{\sin(r\sqrt{\kappa})} - \frac{\sqrt{\kappa} \cdot \Delta_{M} r}{\sin(r\sqrt{\kappa})}
\quad \text{and} \quad
\Delta_{M} H_{p} = \frac{\kappa \coth(r\sqrt{\kappa})}{\sinh(r\sqrt{\kappa})} - \frac{\sqrt{\kappa} \cdot \Delta_{M} r}{\sinh(r\sqrt{\kappa})}.
$$
Therefore it follows from (\ref{eq_distance_laplacian_comparison}) that 
$\Delta_{M} S_{p} \leq 0 \leq \Delta_{M} H_{p}$, thus proving (i). 
Using (\ref{eq_gamma}) together with the fact that $ \lim_{x \to 0} \tan(ax)/x = a$ we have that 
\begin{align*}
\lim_{q \rightarrow p}
\left( S_{p}(q) + \log |w(p)-w(q)|  \right) &=
\lim_{q \rightarrow p}
\left( S_{p}(q) + \log(r(q)) \right) 
\\ & -
\lim_{q \rightarrow p} \left( \log(r(q)) - \log |w(p)-w(q)|  \right) 
= -\frac{1}{2} \log \left( \frac{\kappa \lambda(p)}{4} \right).
\end{align*}
Since $ \lim_{x \to 0} \tanh(ax)/x = a$ as well, a similar equality holds for $H_{p}$ so that (iii) holds true and the proof is completed.
\end{proof}

The following proposition gives bounds for the conformal factor $\lambda$ at the origin and on the boundary.
Observe that the upper bound at the origin is given by the value of the spherical conformal factor at the origin, and the lower bound is given by the hyperbolic one.
On the other hand, on the boundary the situation is reversed, meaning that the upper bound is given by the hyperbolic value and the lower bound by the spherical one.
This might indicate why the bounds on the entire disc are not trivially given by the extremal cases.

\begin{proposition}
We have that
\begin{equation}
\frac{4}{\kappa} 
\tanh^{2} \left( \frac{\delta\sqrt{\kappa}}{2} \right)
\leq \lambda(p_{0}) \leq 
\frac{4}{\kappa} 
\tan^{2} \left( \frac{\delta\sqrt{\kappa}}{2} \right),
\label{eq_1947}
\end{equation}
and
\begin{equation}
\frac{\sin^{2}(\delta \sqrt{\kappa})}{\kappa}
\leq \lambda \big|_{\partial{B}} \leq
\frac{\sinh^{2}(\delta \sqrt{\kappa})}{\kappa}.
\label{eq_2014}
\end{equation}
\label{prop_2140}
\end{proposition}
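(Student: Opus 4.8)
The plan is to sandwich Green's function $G = -\log|w|$ between the barriers $S_{p_{0}}$ and $H_{p_{0}}$ centred at the origin, then read off the interior estimate (\ref{eq_1947}) from the behaviour at the pole $p_{0}$ and the boundary estimate (\ref{eq_2014}) from the normal derivatives on $\partial B$. First I would collect the relevant values. Since $w(p_{0}) = 0$, the function $G(q) + \log|w(p_{0}) - w(q)|$ is identically $0$, so the regularised value of $G$ at its pole is $0$; by Proposition \ref{prop_barriers_properties}(iii) the regularised values of $S_{p_{0}}$ and $H_{p_{0}}$ at $p_{0}$ are both $-\frac{1}{2}\log(\kappa \lambda(p_{0})/4)$. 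On $\partial B$, where the distance to $p_{0}$ is constantly $\delta$, we have $G \equiv 0$, $S_{p_{0}} \equiv -\log\tan(\delta\sqrt{\kappa}/2)$ and $H_{p_{0}} \equiv -\log\tanh(\delta\sqrt{\kappa}/2)$.

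By Proposition \ref{prop_barriers_properties}(i) the difference $H_{p_{0}} - G$ is subharmonic on $B \setminus \{p_{0}\}$, and since its two logarithmic poles cancel it extends to a subharmonic function on all of $B$; the same holds for $G - S_{p_{0}}$. Applying the maximum principle to each extension (whose maximum is therefore attained on $\partial B$) yields
$$
H_{p_{0}} + \log\tanh\!\left(\tfrac{\delta\sqrt{\kappa}}{2}\right) \leq G \leq S_{p_{0}} + \log\tan\!\left(\tfrac{\delta\sqrt{\kappa}}{2}\right) \qquad \text{on } B ,
$$
where all three functions vanish on $\partial B$. Evaluating the outer inequalities at $p_{0}$ and substituting the regularised values above gives $\tanh^{2}(\delta\sqrt{\kappa}/2) \leq \kappa\lambda(p_{0})/4 \leq \tan^{2}(\delta\sqrt{\kappa}/2)$, which is precisely (\ref{eq_1947}).

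For (\ref{eq_2014}) I would differentiate the sandwich in the outward normal direction at a fixed $p \in \partial B$. The Kellogg--Painlev\'{e} regularity recorded in Section \ref{sec_2} makes $G$, $S_{p_{0}}$ and $H_{p_{0}}$ differentiable up to $\partial B$; since all three agree (equal $0$) at $p$ and $G$ lies between the other two, comparing one-sided derivatives at this boundary minimum gives $\partial_{n} S_{p_{0}}(p) \leq \partial_{n} G(p) \leq \partial_{n} H_{p_{0}}(p)$, with $\partial_{n}$ the outward Riemannian normal derivative. The outward normal to the geodesic sphere $\partial B$ is radial, so the identities $\partial_{r} S_{p_{0}} = -\sqrt{\kappa}/\sin(r\sqrt{\kappa})$ and $\partial_{r} H_{p_{0}} = -\sqrt{\kappa}/\sinh(r\sqrt{\kappa})$ from the proof of Proposition \ref{prop_barriers_properties}, evaluated at $r = \delta$, supply the outer terms. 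Finally, because $G = -\log|w|$ has constant value $0$ on $\partial B$ where $|w| \equiv 1$, its Euclidean gradient there is the inward unit radial field, whence $|\nabla G|_{M} = \lambda^{-1/2}$ and $\partial_{n} G(p) = -\lambda(p)^{-1/2}$; inserting this into the chain of inequalities and rearranging gives (\ref{eq_2014}).

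The step I expect to be the main obstacle is this last boundary comparison: one must justify differentiating all functions up to $\partial B$ and carry out the Hopf-type comparison of one-sided normal derivatives, taking care that every quantity involved is negative, so that the inequalities reverse twice on passing from $\partial_{n} G$ back to $\lambda$.
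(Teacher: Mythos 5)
Your proposal is correct and takes essentially the same route as the paper: sandwiching Green's function $G = -\log|w|$ between $H_{p_{0}} + \log\tanh(\delta\sqrt{\kappa}/2)$ and $S_{p_{0}} + \log\tan(\delta\sqrt{\kappa}/2)$ via the maximum principle, extracting (\ref{eq_1947}) from the behaviour at the pole $p_{0}$, and extracting (\ref{eq_2014}) from the one-sided normal-derivative comparison on $\partial B$ together with the identity $\lambda = \Vert \nabla G \Vert^{-2}$ there. The only cosmetic difference is that the paper phrases the interior estimate through $\lim_{p \to p_{0}} r(p)/|w(p)|$ and (\ref{eq_gamma}) rather than through the regularised values at the pole, which amounts to the same computation.
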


\begin{proof}
Write $G = -\log |w|$ for the associated Green's function.
Then 
\begin{equation}
S_{p_{0}}(q) + \log(\tan(\delta \sqrt{\kappa}/2)) = 
H_{p_{0}}(q) + \log(\tanh(\delta \sqrt{\kappa} / 2)) = 
G(q) = 0 
\quad \text{for all }q\in \partial{B}.
\label{eq_1914}
\end{equation} 
According to Proposition \ref{prop_barriers_properties} we have that  $S_{p_{0}}$ is superharmonic and $H_{p_{0}}$ is subharmonic. Applying the maximum principle we obtain that
\begin{equation}
H_{p_{0}}(p) + \log(\tanh(\delta \sqrt{k}/ 2)) 
\leq G(p) \leq 
S_{p_{0}}(p) + \log(\tan(\delta \sqrt{k}/ 2))
\qquad \text{for all } p \in B.
\label{eq_barriers}
\end{equation}
Define $r: \overline{B} \to \RR$ by $r(p) = d_{M}(p_{0},p)$. 
Using (\ref{eq_barriers}) together with the fact that $\lim_{x \rightarrow 0 } \tan(ax) / x = \lim_{x \rightarrow 0} \tanh(ax) / x = a$ we obtain that for any $p \in B$,
$$
\frac{2}{\sqrt{\kappa}} 
\tanh \left( \frac{\delta \sqrt{\kappa}}{2} \right) 
\leq 
\lim_{p \rightarrow p_{0}}\frac{r(p)}{|w(p)|} 
\leq
\frac{2}{\sqrt{\kappa}}
\tan \left( \frac{\delta \sqrt{\kappa}}{2} \right).
$$
Thus by using (\ref{eq_gamma}) we see that (\ref{eq_1947}) holds true.
The functions in (\ref{eq_1914}) are equal to zero on the boundary $\partial B$. In the disc $B$ they satisfy the inequalities given in (\ref{eq_barriers}). Hence we conclude that for any $q \in \partial{B}$ we have
$$
\frac{\sqrt{\kappa}}{\sinh(\delta \sqrt{\kappa})}
= -\partial_{r} H_{p_0}(q) 
\leq \Vert \nabla G(q) \Vert \leq 
-\partial_{r} S_{p_0}(q)
= \frac{\sqrt{\kappa}}{\sin(\delta \sqrt{\kappa})},
$$
where $\Vert \cdot \Vert$ denotes the Riemannian norm on the relevant tangent space.
Since $\lambda(q) = \Vert \nabla G(q) \Vert^{-2}$ for $q\in \partial{B}$ we see that (\ref{eq_2014}) holds true, thus completing the proof.
\end{proof}

\section{The ratio between distances}
\label{sec_4}

Given two distinct points $q_{1}$ and $q_{2}$ on the boundary $\partial{B}$, there are two paths joining them in $\partial{B}$. The length of the shorter path is the arc-distance, which we denote by $d_{\partial{B}}(q_{1},q_{2})$.
When $w : \overline{B} \to \overline{\DD}$ is an isothermal coordinate chart with $w(p_{0}) = 0$ and $w(\partial{B}) = \MBS^{1}$, we denote the Euclidean arc-distance between $w(q_{1})$ and $w(q_{2})$ in $\MBS^{1}$ by $d_{\MBS^{1}}(w(q_{1}),w(q_{2}))$.
The infinitesimal ratio between $d_{\partial B}$ and $d_{\MBS^{1}}$ is given by the square-root of conformal factor $\lambda$. Hence by using (\ref{eq_2014}) from Proposition \ref{prop_2140} we see that
\begin{equation}
\frac{\sin(\delta\sqrt{\kappa})}{\sqrt{\kappa}} \leq \frac{d_{\partial{B}}(q_{1},q_{2})}{d_{\MBS^{1}}(w(q_{1}),w(q_{2}))} 
\leq \frac{\sinh(\delta\sqrt{\kappa})}{\sqrt{\kappa}}.
\label{eq_2140}
\end{equation}

\medskip
Write $\angle(q_{1},q_{2})$ for the angle between the geodesic segments in $B$ connecting $q_{1}$ and $q_{2}$ with the center $p_{0} \in B$.
The infinitesimal ratio between $d_{\partial B}$ and $\angle$ is given by the Jacobian of the normal polar coordinate chart.
It therefore follows from the comparison argument (\ref{eq_1856}) that
\begin{equation}
\frac{\sin(\delta\sqrt{\kappa})}{\sqrt{\kappa}}
\leq 
\frac{d_{\partial{B}}(q_{1},q_{2})}{\angle(q_{1},q_{2})} 
\leq
\frac{\sinh(\delta\sqrt{\kappa})}{\sqrt{\kappa}}.
\label{eq_2151}
\end{equation}

\medskip
For the Euclidean unit disc we have the following inequalities bounding the ratio between the arc-distance and the chordal distance of two distinct points $w(q_{1}), w(q_{2})\in \MBS^{1}$,
\begin{equation}
1 \leq \frac{d_{\MBS^{1}}(w(q_{1}),w(q_{2}))}{|w(q_{1}) - w(q_{2})|} \leq \frac{\pi}{2}.
\label{eq_0229}
\end{equation}
A similar inequality holds in the more general case of a surface that satisfies our assumptions, as we show in the following lemma. The qualitative statement of the lemma is that the maximum of the ratio between the angle at the origin and the Riemannian distance of two points on the boundary is attained for antipodal points.
Our proof relies on the comparison property stated in Proposition \ref{prop_cat}, even though the statement of the lemma does not involve the curvature bounds. 
We therefore believe that a more direct proof could probably be found.

\begin{lemma} 
For any two distinct points $q_{1}, q_{2} \in \partial{B}$ we have
$$
\frac{\angle(q_{1},q_{2})}{d_{M}(q_{1},q_{2})} \leq
\frac{\pi}{2\delta} .
$$
\label{lem_ratio_angle_arc}
\end{lemma}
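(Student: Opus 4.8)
The plan is to reduce the inequality to a statement about the model sphere $S_{\kappa}$ via the $\text{CAT}(\kappa)$ comparison of Proposition \ref{prop_cat}, and then to verify that spherical statement by elementary trigonometry. Write $\theta = \angle(q_{1},q_{2})$ for the angle at $p_{0}$, and recall that $d_{M}(p_{0},q_{1}) = d_{M}(p_{0},q_{2}) = \delta$ since $q_{1},q_{2}\in \partial B$. First I would check that Proposition \ref{prop_cat} applies to the geodesic triangle $\Delta(p_{0},q_{1},q_{2})$: concatenating the two radii $[q_{1},p_{0}]$ and $[p_{0},q_{2}]$ gives a path of length $2\delta$, so $d_{M}(q_{1},q_{2}) \leq 2\delta$ and the perimeter is at most $4\delta$. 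The hypothesis $\delta^{2}\kappa < \pi^{2}/4$ is precisely $4\delta < 2\pi/\sqrt{\kappa}$, so the perimeter condition $L(\Delta) < 2\pi/\sqrt{\kappa}$ holds and the comparison triangle exists in $S_{\kappa}$. Proposition \ref{prop_cat} then yields $d_{M}(q_{1},q_{2}) \geq d_{\kappa}(\tilde{q}_{1},\tilde{q}_{2})$, where $\tilde{q}_{1},\tilde{q}_{2}\in S_{\kappa}$ lie at distance $\delta$ from a common apex and subtend the angle $\theta$. It therefore suffices to prove the purely spherical estimate $d_{\kappa}(\tilde{q}_{1},\tilde{q}_{2}) \geq (2\delta/\pi)\,\theta$.

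To evaluate $d_{\kappa}(\tilde{q}_{1},\tilde{q}_{2})$, I would set $\beta = \delta\sqrt{\kappa} \in (0,\pi/2)$ and let $\gamma = \sqrt{\kappa}\cdot d_{\kappa}(\tilde{q}_{1},\tilde{q}_{2})$ be the corresponding arc length on the unit sphere, so that the rescaled isosceles triangle has equal sides $\beta$, apex angle $\theta$ and base $\gamma$. Dropping the perpendicular from the apex onto the base (equivalently, applying the spherical law of cosines and a half-angle identity) gives the clean relation
$$
\sin(\gamma/2) = \sin\beta \cdot \sin(\theta/2).
$$
In these terms the target inequality $d_{\kappa}(\tilde{q}_{1},\tilde{q}_{2}) \geq (2\delta/\pi)\theta$ becomes $\gamma \geq (2\beta/\pi)\theta$. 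The endpoints match: $\theta = 0$ forces $\gamma = 0$, while the antipodal case $\theta = \pi$ gives $\sin(\gamma/2) = \sin\beta$, hence $\gamma = 2\beta$. This is exactly where equality holds, consistent with the qualitative claim that the extremal ratio is attained for antipodal points.

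Finally I would establish $\gamma \geq (2\beta/\pi)\theta$ by a concavity argument. Substituting $s = \theta/2 \in (0,\pi/2]$ and $h(s) = \arcsin(\sin\beta\,\sin s) = \gamma/2$, the claim reads $h(s) \geq (2\beta/\pi)\,s$ on $[0,\pi/2]$, with $h(0) = 0$ and $h(\pi/2) = \beta$. Since the chord of $h$ joining these endpoints is exactly $(2\beta/\pi)s$, it is enough to show that $h$ is concave. A direct computation of the second derivative gives
$$
h''(s) = \frac{\sin\beta\,\sin s\,(\sin^{2}\beta - 1)}{\left(1 - \sin^{2}\beta\,\sin^{2}s\right)^{3/2}},
$$
which is nonpositive on $(0,\pi/2)$ because $\sin\beta < 1$. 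Thus $h$ lies above its chord, giving $\gamma \geq (2\beta/\pi)\theta$ and hence $d_{M}(q_{1},q_{2}) \geq d_{\kappa}(\tilde{q}_{1},\tilde{q}_{2}) \geq (2\delta/\pi)\theta$, which is the lemma. The hard part is this spherical estimate, but once the half-angle identity $\sin(\gamma/2) = \sin\beta\,\sin(\theta/2)$ is in hand the problem collapses to the one-line second-derivative check above; so the main care required is in setting up the comparison correctly, keeping track of the scaling between $S_{\kappa}$ and the unit sphere and verifying the perimeter hypothesis.
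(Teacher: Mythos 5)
Your proposal is correct and follows essentially the same route as the paper: both apply Proposition \ref{prop_cat} to reduce to an isosceles comparison triangle on $S_{\kappa}$, and both establish the resulting spherical inequality by showing that the distance-versus-angle function is concave and comparing it with the linear function through the antipodal endpoint (the paper phrases this, via Lemma \ref{lem_attains_inf}, as $g(x)/x$ being decreasing so the infimum is at $x=\pi$, which is exactly your chord comparison). Your half-angle substitution $\sin(\gamma/2)=\sin\beta\,\sin(\theta/2)$ is a pleasant cosmetic simplification that makes the second-derivative computation cleaner than the paper's direct differentiation of $\cos^{-1}\left(\cos^{2}(a)+\sin^{2}(a)\cos(x)\right)$, but the underlying argument is identical.
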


\begin{proof} 
Consider the sphere of constant curvature $\kappa$ and a geodesic triangle on this sphere with edges of lengths $a$, $b$ and $c$. Let $\gamma$ be the angle opposite to the edge of length $c$. According to the spherical law of cosines we have that
\begin{equation}
\cos(c\sqrt{\kappa}) = 
\cos(a\sqrt{\kappa}) \cos(b\sqrt{\kappa}) +
\sin(a\sqrt{\kappa}) \sin(b\sqrt{\kappa}) \cos(\gamma).
\label{eq_1449}
\end{equation}
Since $d_{M}(p_{0},q_{1}) = d_{M}(p_{0},q_{2}) = \delta$ the perimeter of the geodesic triangle $\Delta(p_{0},q_{1},q_{2})$ in $M$ is at most $4\delta$, which is less than $2\pi / \sqrt{\kappa}$ according to our assumptions. 
It therefore follows from Proposition \ref{prop_cat} and (\ref{eq_1449}) that
$$
\frac{d_{M}(q_{1},q_{2})}{\angle(q_{1},q_{2})} \geq 
\frac{\cos^{-1}(\cos^{2}(\delta\sqrt{\kappa}) +
\sin^{2}(\delta\sqrt{\kappa}) \cos(\angle(q_{1},q_{2})))}{\sqrt{\kappa} \cdot \angle(q_{1},q_{2})}.
$$
According to Lemma \ref{lem_attains_inf} from the appendix, the infimum of the right-hand side is attained when $\angle(q_{1},q_{2})= \pi$. Moreover, since $\cos(2\theta) = \cos^{2}(\theta) - \sin^{2}(\theta)$ and $\delta \sqrt{\kappa} < \pi / 2$ we see that 
$$
\frac{d_{M}(q_{1},q_{2})}{\angle(q_{1},q_{2})} \geq \frac{2\delta \sqrt{\kappa}}{\sqrt{\kappa}\cdot \pi} = \frac{2\delta}{\pi},
$$
and the proof is completed.
\end{proof}

\medskip
The following corollary combines the inequalities above in order to bound the ratio between the Riemannian distance and the Euclidean distance for two points on boundary. 

\begin{corollary}
For any two distinct points $q_{1}, q_{2} \in \partial{B}$ we have that
$$
\frac{2 \delta \sin (\delta\sqrt{\kappa})}{\pi \sinh(\delta\sqrt{\kappa})} 
\leq 
\frac{d_{M}(q_{1},q_{2})}{|w(q_{1})-w(q_{2})|}
 \leq
\frac{\pi \sinh(\delta\sqrt{\kappa})}{2 \sqrt{\kappa}} .
$$
\label{cor_2331}
\end{corollary}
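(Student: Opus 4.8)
The plan is to assemble the corollary from the four ratio estimates already collected in this section, namely (\ref{eq_2140}), (\ref{eq_2151}) and (\ref{eq_0229}) together with Lemma \ref{lem_ratio_angle_arc}. The organizing idea is to write the target ratio as a telescoping product of ratios, each factor of which is controlled by exactly one of these inputs. Concretely, for distinct $q_{1}, q_{2} \in \partial B$ I would use
$$
\frac{d_{M}(q_{1},q_{2})}{|w(q_{1})-w(q_{2})|}
= \frac{d_{M}(q_{1},q_{2})}{\angle(q_{1},q_{2})}
\cdot \frac{\angle(q_{1},q_{2})}{d_{\partial B}(q_{1},q_{2})}
\cdot \frac{d_{\partial B}(q_{1},q_{2})}{d_{\MBS^{1}}(w(q_{1}),w(q_{2}))}
\cdot \frac{d_{\MBS^{1}}(w(q_{1}),w(q_{2}))}{|w(q_{1})-w(q_{2})|},
$$
where every factor is positive since $q_{1} \neq q_{2}$.

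For the lower bound I would bound each factor from below. Lemma \ref{lem_ratio_angle_arc} gives $d_{M}/\angle \geq 2\delta/\pi$; the upper inequality in (\ref{eq_2151}) gives $\angle/d_{\partial B} \geq \sqrt{\kappa}/\sinh(\delta\sqrt{\kappa})$; the lower inequality in (\ref{eq_2140}) gives $d_{\partial B}/d_{\MBS^{1}} \geq \sin(\delta\sqrt{\kappa})/\sqrt{\kappa}$; and the lower inequality in (\ref{eq_0229}) gives $d_{\MBS^{1}}/|w(q_{1})-w(q_{2})| \geq 1$. Multiplying these four estimates, the two factors of $\sqrt{\kappa}$ cancel and I obtain exactly $2\delta\sin(\delta\sqrt{\kappa})/(\pi\sinh(\delta\sqrt{\kappa}))$, as claimed.

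For the upper bound the telescoping product is less convenient, because Lemma \ref{lem_ratio_angle_arc} only controls $\angle/d_{M}$ from above and thus yields no upper bound on the first factor $d_{M}/\angle$. The clean way around this is the elementary observation that the shorter boundary arc joining $q_{1}$ and $q_{2}$ is itself a curve in $\overline{B}$, so its length bounds the Riemannian distance, i.e. $d_{M}(q_{1},q_{2}) \leq d_{\partial B}(q_{1},q_{2})$. I would then write $d_{M}/|w(q_{1})-w(q_{2})| \leq (d_{\partial B}/d_{\MBS^{1}})\cdot(d_{\MBS^{1}}/|w(q_{1})-w(q_{2})|)$ and apply the upper inequality in (\ref{eq_2140}), namely $d_{\partial B}/d_{\MBS^{1}} \leq \sinh(\delta\sqrt{\kappa})/\sqrt{\kappa}$, together with the upper inequality $d_{\MBS^{1}}/|w(q_{1})-w(q_{2})| \leq \pi/2$ from (\ref{eq_0229}); their product is $\pi\sinh(\delta\sqrt{\kappa})/(2\sqrt{\kappa})$, completing the proof. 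The only genuinely non-bookkeeping point is this asymmetry: the one-sided nature of Lemma \ref{lem_ratio_angle_arc} forces the two bounds to be proved by different routes, with the upper bound resting on the inequality $d_{M} \leq d_{\partial B}$ rather than on the full four-term product.
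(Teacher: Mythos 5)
Your proof is correct and follows essentially the same route as the paper: the paper proves the upper bound via $d_{M}(q_{1},q_{2}) \leq d_{\partial B}(q_{1},q_{2})$ combined with (\ref{eq_2140}) and (\ref{eq_0229}), and the lower bound via exactly your four-factor telescoping product using Lemma \ref{lem_ratio_angle_arc}, (\ref{eq_2151}), (\ref{eq_2140}) and (\ref{eq_0229}). Your remark on why the two bounds require different routes is a fair observation but does not change the substance.
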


\begin{proof}
For two distinct points $q_{1},q_{2} \in \partial B$ we have that
$$
\frac{d_{M}(q_{1},q_{2})}{|w(q_{1})-w(q_{2})|} \leq
\frac{d_{\partial B}(q_{1},q_{2})}{|w(q_{1})-w(q_{2})|} =
\frac{d_{\partial B}(q_{1},q_{2})}{d_{\MBS^{1}}(w(q_{1}),w(q_{2}))} \cdot
\frac{d_{\MBS^{1}}(w(q_{1}),w(q_{2}))}{|w(q_{1})-w(q_{2})|}.
$$
The upper bound therefore follows from (\ref{eq_2140}) together with (\ref{eq_0229}).
For the lower bound we write 
$$
\frac{d_{M}(q_{1},q_{2})}{|w(q_{1})-w(q_{2})|} = 
\frac{d_{M}(q_{1},q_{2})}{\angle(q_{1},q_{2})} \cdot
\frac{\angle(q_{1},q_{2})}{d_{\partial{B}}(q_{1},q_{2})} \cdot
\frac{d_{\partial{B}}(q_{1},q_{2})}{d_{\MBS^{1}}(w(q_{1}),w(q_{2}))} \cdot
\frac{d_{\MBS^{1}}(w(q_{1}),w(q_{2}))}{|w(q_{1}) - w(q_{2})|},
$$
and we see that it follows from Lemma \ref{lem_ratio_angle_arc} together with (\ref{eq_2140}), (\ref{eq_2151}) and (\ref{eq_0229}).
\end{proof}

\medskip
In order to bound the ratio between the distances for any two points in $B$ we will successively use the maximum principle for the barriers $S_{p}$ and $H_{p}$ from Section \ref{sec_3} with respect to different points $p \in \overline{B}$.
The boundary inequalities will be met by adding constants whose values are given by virtue of Corollary \ref{cor_2331}. 

\begin{lemma}
Fix $q_{0} \in \partial B$. Then for any $q \in \partial{B} \setminus \{ q_0 \}$ we have
$$
H_{q_{0}}(q) + C_{h}
\leq - \log |w(q) - w(q_{0})| \leq 
S_{q_{0}}(q) + C_{s},
$$
where
$$
C_{h} = \log \left( \frac{\sin (\delta\sqrt{\kappa})}{\pi \cosh(\delta\sqrt{\kappa})} \right)
\qquad \text{and} \qquad
C_{s} = \log \left( \frac{\pi \sinh(\delta\sqrt{\kappa}) \tan ( \delta\sqrt{\kappa})}{4\delta\sqrt{\kappa}}\right).
$$
\label{lem_0016}
\end{lemma}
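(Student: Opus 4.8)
The plan is to derive both inequalities directly from Corollary \ref{cor_2331}, which already controls the ratio $d_M(q_1,q_2)/|w(q_1)-w(q_2)|$ for boundary points, together with two elementary monotonicity facts. Throughout I write $r = d_M(q,q_0)$, and I first record the only geometric input beyond the corollary: since $q$ and $q_0$ both lie on $\partial B$ at distance $\delta$ from $p_0$, the triangle inequality gives $0 < r \le 2\delta$, and in particular $r\sqrt{\kappa}/2 < \delta\sqrt{\kappa} < \pi/2$, so the barriers are well defined at $q$.

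Next I would take logarithms in Corollary \ref{cor_2331} applied to the pair $q, q_0$. This sandwiches the target quantity between $-\log r$ and explicit additive constants:
$$
-\log r + \log\left(\frac{2\delta\sin(\delta\sqrt{\kappa})}{\pi\sinh(\delta\sqrt{\kappa})}\right) \le -\log|w(q)-w(q_0)| \le -\log r + \log\left(\frac{\pi\sinh(\delta\sqrt{\kappa})}{2\sqrt{\kappa}}\right).
$$
Recalling that $S_{q_0}(q) = -\log\tan(r\sqrt{\kappa}/2)$ and $H_{q_0}(q) = -\log\tanh(r\sqrt{\kappa}/2)$, substituting these bounds and cancelling the common factors reduces the two claimed inequalities to the purely one-variable statements
$$
\frac{\tan(r\sqrt{\kappa}/2)}{r} \le \frac{\tan(\delta\sqrt{\kappa})}{2\delta} \qquad\text{and}\qquad \frac{r}{\tanh(r\sqrt{\kappa}/2)} \le \frac{2\delta}{\tanh(\delta\sqrt{\kappa})}.
$$
The constants $C_s$ and $C_h$ in the statement are chosen exactly so that these reductions close, so confirming that the bookkeeping matches is part of the argument.

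It then remains to verify these two scalar inequalities for $0 < r \le 2\delta$. Substituting $s = r/2 \in (0,\delta]$ turns them into $\tan(s\sqrt{\kappa})/s \le \tan(\delta\sqrt{\kappa})/\delta$ and $s/\tanh(s\sqrt{\kappa}) \le \delta/\tanh(\delta\sqrt{\kappa})$. Both follow from the standard facts that $x \mapsto \tan(x)/x$ is increasing on $(0,\pi/2)$, since $\tan$ is convex and vanishes at the origin, while $x \mapsto \tanh(x)/x$ is decreasing on $(0,\infty)$, since $\tanh$ is concave and vanishes at the origin. Evaluating at $x = s\sqrt{\kappa}$ and using $s \le \delta$ shows each side is maximized at $s = \delta$, which is precisely the required bound.

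I expect the main obstacle to be organizational rather than conceptual: keeping the several multiplicative constants straight as they pass through the logarithms and confirming that they reproduce $C_s$ and $C_h$ verbatim. The conceptual heart is simply that the two extremal profiles $\tan(x)/x$ and $\tanh(x)/x$ have opposite monotonicity, which is what lets the spherical barrier $S_{q_0}$ furnish the upper bound and the hyperbolic barrier $H_{q_0}$ the lower bound; the curvature enters only through Corollary \ref{cor_2331} and the constraint $r \le 2\delta$.
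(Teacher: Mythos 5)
Your proposal is correct and follows essentially the same route as the paper: take logarithms in Corollary \ref{cor_2331}, then control $\log r - \log\tan(r\sqrt{\kappa}/2)$ and $\log r - \log\tanh(r\sqrt{\kappa}/2)$ over $0 < r \le 2\delta$ via the monotonicity of $x/\tan(ax)$ and $x/\tanh(ax)$, which is exactly Lemma \ref{lem_0048} (itself proved by the convexity argument you invoke). The scalar reductions you state, and the resulting identification of the constants $C_s$ and $C_h$, check out verbatim against the paper's computation.
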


\begin{proof}
Define $r(p) = d_{M}(p, q_{0})$. According to Corollary \ref{cor_2331} for any $q \in \partial{B} \setminus \{q_{0}\}$,
\begin{equation}
\log \left( \frac{2 \delta\sin (\delta\sqrt{\kappa})}{\pi \sinh( \delta\sqrt{\kappa})} \right)\leq 
\log (r(q)) - \log |w(q) - w(q_{0})| \leq
\log \left( \frac{\pi \sinh(\delta\sqrt{\kappa})}{2 \sqrt{\kappa}} \right) .
\label{eq_2333}
\end{equation}
By Lemma \ref{lem_0048} from the appendix, for any $a > 0$ the function $x / \tanh(ax)$ is increasing on $(0, \infty)$. Since $0 < r(q) \leq 2\delta$ we have that 
$$
\log (r(q)) - 
\log \left(\tanh \left( r(q) \sqrt{\kappa}/2 \right)\right) \leq 
\log (2\delta) - \log (\tanh(\delta\sqrt{\kappa})) .
$$
Hence it follows from (\ref{eq_2333}) that for any $q \in \partial{B} \setminus \{q_{0}\}$,
$$
H_{q_{0}}(q) \leq 
-\log(r(q)) - \log(\tanh(\delta\sqrt{\kappa})) + \log(2\delta)
 \leq
- C_{h} -\log |w(q) - w(q_{0})| .
$$
By Lemma \ref{lem_0048} we also have that for any $a > 0$ the function $x / \tan(ax)$ is decreasing on $(0, \pi/2a)$. Since $0 < r(q) \leq 2\delta < \pi / \sqrt{\kappa}$ we obtain that
$$
\log (r(q)) - 
\log \left(\tan \left( r(q) \sqrt{\kappa}/2 \right)\right) \geq 
\log (2\delta) - \log (\tan(\delta\sqrt{\kappa})).
$$
Thus by (\ref{eq_2333}) we have that for any $q \in \partial{B} \setminus \{q_{0}\}$,
$$
S_{q_{0}}(q) \geq  -\log(r(q)) - \log(\tan(\delta\sqrt{\kappa})) + \log(2\delta) \geq
- C_{s} -\log |w(q) - w(q_{0})|,
$$
and the proof is completed.
\end{proof}

\begin{proposition}
For any two distinct points $p,q \in B$ we have
$$
\frac{2}{\pi} \cdot \frac{\sin(\delta \sqrt{\kappa})}{\sqrt{\kappa} \cosh(\delta \sqrt{\kappa})}
\leq \frac{d_{M}(p,q)}{|w(p)-w(q)|} \leq 
\frac{\pi}{2} \cdot \frac{\sinh(\delta\sqrt{\kappa}) \tan ( \delta\sqrt{\kappa})}{\delta\kappa} .
$$
\label{prop_distance_ratio} 
\end{proposition}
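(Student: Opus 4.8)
The plan is to upgrade the boundary-to-boundary estimate of Lemma \ref{lem_0016} to an estimate valid for \emph{arbitrary} interior points, by means of two successive applications of the maximum principle, and then to convert the resulting logarithmic inequalities into the stated bounds on the distance ratio. Throughout I use that for a fixed point $a$ the function $q\mapsto\log|w(q)-a|$ is harmonic wherever $w(q)\neq a$, together with the symmetries $S_p(q)=S_q(p)$ and $H_p(q)=H_q(p)$ coming from the symmetry of $d_{M}$.

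\textbf{Upper bound.} I first fix a boundary point $q_0\in\partial B$ and consider $h(p)=S_{q_0}(p)+\log|w(p)-w(q_0)|$ as a function of $p\in B$. Since $q_0\notin B$, the first summand is superharmonic on all of $B$ by Proposition \ref{prop_barriers_properties}(i), while the second is harmonic on $B$ (its only singularity sits at $q_0\in\partial B$); hence $h$ is superharmonic on $B$. Its boundary values are controlled by Lemma \ref{lem_0016}, which gives $h\ge -C_s$ on $\partial B\setminus\{q_0\}$; at the single point $q_0$ the two logarithmic singularities cancel and $h$ extends continuously with value $-\tfrac12\log(\kappa\lambda(q_0)/4)$, which is again $\ge -C_s$, either by continuity along $\partial B$ or directly from the boundary bound on $\lambda$ in Proposition \ref{prop_2140}. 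The minimum principle then yields $-\log|w(p)-w(q_0)|\le S_p(q_0)+C_s$ for every $p\in B$ and every $q_0\in\partial B$.

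\textbf{Second step and conversion.} Now I fix an interior point $p\in B$ and run the same argument in the second variable. The function $\tilde h(q)=S_p(q)+\log|w(p)-w(q)|$ is superharmonic on $B\setminus\{p\}$, and because $S_p$ and $-\log|w(p)-w(q)|$ share the same logarithmic pole at $q=p$ (Proposition \ref{prop_barriers_properties}(iii)), it extends to a superharmonic function on $B$. By the previous step its boundary values satisfy $\tilde h\ge -C_s$ on $\partial B$, so the minimum principle gives $-\log|w(p)-w(q)|\le S_p(q)+C_s$ for all distinct $p,q\in B$. Unwinding the definition of $S_p$ this reads $\tan(d_{M}(p,q)\sqrt\kappa/2)\le e^{C_s}|w(p)-w(q)|$, whence
$$
\frac{d_{M}(p,q)}{|w(p)-w(q)|}\le e^{C_s}\cdot\frac{d_{M}(p,q)}{\tan(d_{M}(p,q)\sqrt\kappa/2)}\le e^{C_s}\cdot\frac{2}{\sqrt\kappa},
$$
the last step because $x/\tan(x\sqrt\kappa/2)$ is decreasing on $(0,\pi/\sqrt\kappa)$ by Lemma \ref{lem_0048}, with supremum $2/\sqrt\kappa$ attained as $x\to0$. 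Substituting the value of $C_s$ produces exactly the claimed upper bound. The lower bound is entirely parallel: replacing $S_p$ by the subharmonic barrier $H_p$ and the minimum principle by the maximum principle, I obtain $-\log|w(p)-w(q)|\ge H_p(q)+C_h$, and since $x/\tanh(x\sqrt\kappa/2)$ is increasing with infimum $2/\sqrt\kappa$, this gives $d_{M}(p,q)/|w(p)-w(q)|\ge e^{C_h}\cdot 2/\sqrt\kappa$, matching the stated lower bound after inserting $C_h$.

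\textbf{Main obstacle.} The delicate point is organizing the two nested applications of the maximum principle so that the logarithmic poles are correctly accounted for in each. In the first step the pole of $\log|w-w(q_0)|$ lies on $\partial B$ and must be absorbed into the boundary data, which is precisely where the continuity (or Proposition \ref{prop_2140}) check at $q_0$ enters; in the second step the pole is interior and must be cancelled against the pole of $S_p$, respectively $H_p$, so that the extended function is genuinely superharmonic, respectively subharmonic, on the whole disc. Once the functions are set up so that these poles cancel, the argument reduces to the already-established boundary estimate of Lemma \ref{lem_0016} and the elementary monotonicity facts of Lemma \ref{lem_0048}, and I expect no further analytic difficulty.
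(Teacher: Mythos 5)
Your proof is correct and takes essentially the same route as the paper's: a first application of the maximum principle with barriers centered at a boundary point (boundary data supplied by Lemma \ref{lem_0016}), the symmetries $S_{p}(q)=S_{q}(p)$ and $H_{p}(q)=H_{q}(p)$ to recenter at an interior point, a second application of the maximum principle using the cancellation of logarithmic poles, and an elementary conversion at the end. The only cosmetic differences are that the paper treats both bounds simultaneously and finishes with the inequality $\tanh(x) < x < \tan(x)$ rather than the monotonicity statement of Lemma \ref{lem_0048}, which is an equivalent way to conclude.
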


\begin{proof}
Fix $q_{0}\in \partial{B}$ and consider the functions $h,s: \overline{B} \setminus \{ q_{0} \} \to \RR$ given by
$$
h(p) = H_{q_{0}}(p) +\log |w(p)-w(q_{0})| + C_{h} 
\quad \text{and} \quad
s(p) = S_{q_{0}}(p) +\log |w(p)-w(q_{0})| + C_{s} ,
$$
where $C_{h}$ and $C_{s}$ are as defined in Lemma \ref{lem_0016}. According to this lemma we have that 
$$
h(q) \leq 0 \leq s(q) 
\qquad \text{for all } q\in \partial{B} \setminus \{ q_{0} \}.
$$
Moreover, by Proposition \ref{prop_barriers_properties} we see that $h$ is subharmonic on $B$, that $s$ is superharmonic on $B$, and that both functions are well-defined and continuous at $q_{0}$.
Hence by the maximum principle we obtain that for any $p\in B$,
\begin{equation}
H_{q_{0}}(p) + C_{h}
\leq -\log |w(p)-w(q_{0})| \leq 
S_{q_{0}}(p) + C_{s}.
\label{eq_0234}
\end{equation}
Fix $p_{1} \in B$. Observe that for $q \in \partial B$ we have the symmetries $H_{q}(p_{1}) = H_{p_{1}}(q)$ and $S_{q}(p_{1}) = S_{p_{1}}(q)$.
Hence it follows from (\ref{eq_0234}) that for any $q\in \partial{B}$,
$$
H_{p_{1}}(q) + C_{h} 
\leq -\log |w(p_{1})-w(q)| \leq 
S_{p_{1}}(q) + C_{s}.
$$
Applying the maximum principle again we obtain that for any $p_{2} \in B \setminus \{ p_{1} \}$,
$$
H_{p_{1}}(p_{2}) + C_{h} 
\leq -\log |w(p_{1})-w(p_{2})| \leq
S_{p_{1}}(p_{2}) + C_{s}.
$$
The result is therefore obtained by using the additional fact that $\tanh(x) < x < \tan(x)$ for $0 < x < \pi / 2$.
\end{proof}

Using Proposition \ref{prop_2140} and Proposition \ref{prop_distance_ratio} we are able to prove Theorem \ref{thm_main}. 
The only missing ingredient is the following lemma which is a rather simple application of the maximum principle.

\begin{lemma}
For $u \in C^{2}(\delta \DD)\cap C^{0}(\delta \overline{\DD})$ we have
$$
\sup_{\delta \DD} |u| \leq 
\sup_{\delta \MBS^{1}} |u| + \frac{\delta^{2}}{4} \cdot \sup_{\delta \DD} |\Delta u| .
$$
\label{lem_go_to_boundary}
\end{lemma}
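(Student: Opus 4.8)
The plan is to compare $u$ against a quadratic barrier that exactly absorbs the worst admissible value of the Laplacian, and then to invoke the maximum principle for subharmonic functions stated in Section \ref{sec_2}. Write $m = \sup_{\delta\MBS^{1}}|u|$ and $N = \sup_{\delta\DD}|\Delta u|$, and assume $N < \infty$ (otherwise the inequality is vacuous). I would introduce the barrier
$$
\psi(z) = \frac{N}{4}\left(\delta^{2} - |z|^{2}\right),
$$
which is nonnegative on $\delta\overline{\DD}$, vanishes on the boundary $\delta\MBS^{1}$, and attains its maximum value $N\delta^{2}/4$ at the origin.

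The key computation is that $\Delta\left(|z|^{2}\right) = 4$ in $\CC \cong \RR^{2}$, obtained by writing $z = x+iy$ and differentiating $x^{2}+y^{2}$ twice, so that $\Delta\psi \equiv -N$. Consequently $\Delta(u - \psi) = \Delta u + N \geq 0$ on $\delta\DD$, since $\Delta u \geq -N$ everywhere; that is, $u - \psi$ is subharmonic. Because $\psi$ vanishes on $\delta\MBS^{1}$, the boundary values of $u-\psi$ coincide with those of $u$ and are bounded above by $m$. Applying the maximum principle to $u - \psi - m$ then yields $u(z) - \psi(z) \leq m$ for every $z \in \delta\DD$, and since $\psi \leq N\delta^{2}/4$ we conclude that $u(z) \leq m + \delta^{2}N/4$.

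To finish, I would run the identical argument with $-u$ in place of $u$: the function $-u - \psi$ is likewise subharmonic because $-\Delta u \geq -N$, and its boundary values are again bounded above by $m$, which gives $-u(z) \leq m + \delta^{2}N/4$. Combining the two one-sided estimates produces $|u(z)| \leq m + \delta^{2}N/4$ for all $z \in \delta\DD$, and taking the supremum over $\delta\DD$ delivers the claimed inequality. I do not expect a genuine obstacle here; the only point requiring care is selecting the barrier with precisely the coefficient $N/4$ so that $\Delta\psi$ cancels the worst-case value of $\Delta u$, and the constant $\delta^{2}/4$ in the statement is simply the maximum of $(\delta^{2}-|z|^{2})/4$ over the disc.
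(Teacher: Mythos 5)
Your proof is correct and is essentially the same as the paper's: the paper uses the barrier $v(z) = \sup_{\delta \MBS^{1}}|u| + \frac{\delta^{2}-|z|^{2}}{4}\sup_{\delta\DD}|\Delta u|$, which is exactly your $m + \psi$, and applies the maximum principle to the superharmonic function $v-u$ (equivalently, your subharmonic $u-\psi-m$), then repeats with $-u$. The only difference is cosmetic phrasing (superharmonic versus subharmonic), which the paper itself notes is a symmetric restatement.
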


\begin{proof}
In case $\sup_{\delta \DD}| \Delta u |$ is infinite, the inequality is trivial. Otherwise, define $v : \delta \overline{\DD} \to \RR$ by
$$
v(z) = \sup_{\delta \MBS^{1}} |u| + \frac{\delta^{2} - |z|^{2}}{4} \cdot  \sup_{\delta \DD}|\Delta u|.
$$
Since $ \Delta |z|^{2} = 4 $ we have that 
$\Delta v = - \sup_{\delta \DD}|\Delta u|$. 
Therefore $\Delta(v-u) \leq 0$ and $v-u$ is superharmonic in $\delta \DD$. 
Moreover, for any $\zeta \in \delta \MBS^{1}$ we have that
$ (v-u)(\zeta) \geq 0$.
Hence by applying the maximum principle we obtain that for any $z \in \delta \DD$,
$$
u(z) \leq v(z) \leq
\sup_{\delta \MBS^{1}} |u| + \frac{\delta^{2}}{4} \cdot \sup_{\delta \DD}|\Delta u| .
$$ 
A similar inequality holds for $-u$ as well, and therefore we obtain the result of lemma.
\end{proof}

\begin{proof} [Proof of Theorem \ref{thm_main}]
Recall that $z = \delta w$ and that the corresponding conformal factors admit the relation $\varphi(z) = \delta^{-2} \lambda(w)$.
According to Proposition \ref{prop_distance_ratio} together with (\ref{eq_gamma_distance_equiv}) we have that
$$
\varphi \leq 
\frac{\pi^{2}}{4} \cdot \left( \frac{\sinh(\delta\sqrt{\kappa}) \tan ( \delta\sqrt{\kappa})}{\delta^{2}\kappa} \right)^{2} .
$$ 
Proposition \ref{prop_2140} together with the fact that
$ \log ( \sin^{2}(x) / x^{2} ) > - x^{2} / 2 $
and 
$ \log ( \sinh^{2}(x) / x^{2} ) < x^{2}/2 $
for any $0 < x < \pi / 2$,
which follows from Lemma \ref{lem_some_estimates} in the appendix, shows that
$$
-\frac{\delta^{2}\kappa}{2}
\leq \log \left( \varphi \big|_{\partial{B}} \right) \leq
\frac{\delta^{2}\kappa}{2} .
$$
Therefore according to Lemma \ref{lem_go_to_boundary} and Liouville's equation (\ref{eq_curvature_formula}) we obtain that 
$$
\sup_{B} \left| \log \varphi \right| \leq 
\sup_{\partial{B}} \left| \log \varphi \right| + 
\frac{\delta^{2}}{2}\cdot \sup_{B}|K \cdot \varphi| 
\leq 
\frac{\delta^{2}\kappa}{2} 
\left[1  + \frac{\pi^{2}}{4} \cdot \left( \frac{\sinh(\delta \sqrt{\kappa}) \tan(\delta \sqrt{\kappa})}{\delta^{2}\kappa}\right)^{2} \right],
$$
and the theorem is proved.
\end{proof}

\section{Appendix}

\begin{lemma}
Suppose that $g:[0,a] \to \RR$ is a continuous strictly convex (resp. concave) function such that $f(x) = g(x)/x$ is a well-defined continuous function. Then $f$ is strictly increasing (resp. decreasing).
\label{lem_concanve_monotone}
\end{lemma}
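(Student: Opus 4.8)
The plan is to reduce the statement to the classical monotonicity of the chord-slope function of a convex function, anchored at the left endpoint $0$. First I would observe that the hypothesis that $f(x) = g(x)/x$ extends to a well-defined continuous function on all of $[0,a]$ forces $g(0) = 0$: since $g$ is continuous we have $g(x) \to g(0)$ as $x \to 0^{+}$, so if $g(0) \neq 0$ then $|g(x)/x| \to \infty$, contradicting the finiteness of $f(0)$. Hence for $x > 0$ we may write $f(x) = (g(x) - g(0))/(x - 0)$, which is exactly the slope of the chord of $g$ joining the point $0$ to the point $x$.

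With $g(0) = 0$ in hand, I would establish strict monotonicity on the open interval $(0,a]$ directly from the definition of strict convexity. Fix $0 < s < t \le a$ and write $s = \theta t$ with $\theta = s/t \in (0,1)$, so that $s = (1-\theta)\cdot 0 + \theta \cdot t$ is a strict convex combination. Strict convexity of $g$ then yields
$$
g(s) < (1-\theta)\, g(0) + \theta\, g(t) = \frac{s}{t}\, g(t),
$$
and dividing by $s > 0$ gives $f(s) = g(s)/s < g(t)/t = f(t)$. This shows that $f$ is strictly increasing on $(0,a]$.

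To extend the strict inequality to the closed endpoint, I would combine continuity of $f$ at $0$ with the monotonicity already obtained. Given any $t \in (0,a]$, choose an intermediate point $0 < s < t$; then
$$
f(0) = \lim_{x \to 0^{+}} f(x) \le f(s) < f(t),
$$
where the first inequality holds because $f$ is non-decreasing on $(0,a]$, so $f(0)$ equals the infimum $\inf_{x>0} f(x) \le f(s)$, and the second is the strict inequality just proved. Thus $f(0) < f(t)$ for every $t > 0$, and $f$ is strictly increasing on all of $[0,a]$. The concave case follows immediately by applying the convex case to $-g$, whose associated quotient is $-f$.

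I expect the only genuinely delicate point to be the treatment of the endpoint $x = 0$, where one must take care to extract a strict rather than merely weak inequality; away from the endpoint the three-chord (slope) inequality for convex functions does all the work.
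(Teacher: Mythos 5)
Your proof is correct and is essentially the same argument as the paper's: the paper defines $h(x) = g(x) - f(x_0)x$ and uses strict convexity with $h(0) = h(x_0) = 0$ to get $h < 0$ on $(0,x_0)$, which when unpacked is exactly your convex-combination inequality anchored at $0$. The only difference is that you make explicit two points the paper leaves implicit --- that continuity of $f$ forces $g(0) = 0$, and the strict inequality at the endpoint $x = 0$ --- which is a welcome bit of extra care rather than a different method.
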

\begin{proof}
Let $x_{0}\in (0,a)$ and consider the function
$$
h(x) = x \left( f(x)-f(x_{0}) \right) = g(x) - f(x_{0})\cdot x.
$$
The function $h$ is strictly convex (resp. concave) and satisfies $h(0)=h(x_{0})=0$. Thus $h(x) < 0$ (resp. $h(x) > 0$) on $(0,x_{0})$ which implies that $f(x) < f(x_{0})$ (resp. $f(x) > f(x_{0})$) for any $x \in (0,x_{0})$.
\end{proof}
 
\begin{lemma}
We have that 
$
\sinh(x) / x < \exp(x^2/4) 
$
for any $x > 0$, and
$
\sin(x) / x > \exp(-x^2/4) 
$
for any $0 < x < \pi/2$.
\label{lem_some_estimates}
\end{lemma}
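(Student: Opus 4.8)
The plan is to reduce both inequalities to termwise estimates via the classical Euler infinite-product expansions
$$
\frac{\sin x}{x} = \prod_{n=1}^{\infty}\left(1 - \frac{x^2}{n^2\pi^2}\right)
\qquad \text{and} \qquad
\frac{\sinh x}{x} = \prod_{n=1}^{\infty}\left(1 + \frac{x^2}{n^2\pi^2}\right),
$$
together with the evaluations $\sum_{n \geq 1} 1/n^2 = \pi^2/6$ and $\sum_{n \geq 1} 1/n^4 = \pi^4/90$. Writing $t_n = x^2/(n^2\pi^2)$ and taking logarithms turns each claim into a statement about $\sum_n \log(1 \pm t_n)$, which I can then control by elementary polynomial bounds on $\log(1 \pm t)$.

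For the hyperbolic inequality I would take logarithms and apply $\log(1+t) < t$, which is strict for $t > 0$. This gives
$$
\log \frac{\sinh x}{x} = \sum_{n=1}^{\infty} \log\left(1 + \frac{x^2}{n^2\pi^2}\right)
< \sum_{n=1}^{\infty} \frac{x^2}{n^2\pi^2} = \frac{x^2}{6},
$$
and since $x^2/6 < x^2/4$ for $x > 0$, the first claim follows at once, in fact with room to spare. For the trigonometric inequality I first note that $0 < x < \pi/2$ forces $t_n < 1/(4n^2) \leq 1/4$, so every factor is positive and the logarithms are legitimate. I would then invoke the lower estimate $\log(1-t) > -t - t^2$, valid for $0 \leq t \leq 1/2$; summing over $n$ and using both series evaluations yields
$$
\log \frac{\sin x}{x} = \sum_{n=1}^{\infty} \log\left(1 - \frac{x^2}{n^2\pi^2}\right)
> -\sum_{n=1}^{\infty} \frac{x^2}{n^2\pi^2} - \sum_{n=1}^{\infty} \frac{x^4}{n^4\pi^4}
= -\frac{x^2}{6} - \frac{x^4}{90}.
$$
It then remains to verify the purely algebraic inequality $-x^2/6 - x^4/90 \geq -x^2/4$, which rearranges to $x^4/90 \leq x^2/12$, i.e.\ $x^2 \leq 15/2$; since $x < \pi/2$ gives $x^2 < \pi^2/4 < 15/2$, this holds with a comfortable margin and completes the proof.

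I expect the only delicate point to be the termwise bound $\log(1-t) > -t - t^2$ on $[0,1/2]$: I would establish it by setting $\psi(t) = \log(1-t) + t + t^2$, noting $\psi(0) = 0$ and computing $\psi'(t) = t(1-2t)/(1-t) > 0$ on $(0,1/2)$, so that $\psi > 0$ there. Everything else is a matter of the two Basel-type sums and one algebraic check, with the restriction $x < \pi/2$ entering precisely to guarantee $t_n \leq 1/2$. An alternative for the hyperbolic bound is a direct Taylor-coefficient comparison, reducing to $(2n+1)! \geq 4^n n!$ for all $n \geq 0$; this follows from the ratio $a_{n+1}/a_n = (2n+3)/2 > 1$ of the sequence $a_n = (2n+1)!/(4^n n!)$ together with $a_0 = 1$. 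I would nonetheless prefer the product approach, since it treats both inequalities in parallel and makes transparent where the bound $\delta\sqrt{\kappa} < \pi/2$ is used.
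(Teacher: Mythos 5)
Your proof is correct, but it takes a genuinely different route from the paper. For the hyperbolic inequality the paper does exactly what you list as your ``alternative'': termwise comparison of the Taylor coefficients of $\sinh(x)/x$ and $\exp(x^2/4)$, reducing to $(2n+1)! > 4^n\, n!$ for $n \geq 1$. For the trigonometric inequality, however, the paper does not touch the Euler product: it sets $g(x) = \sin(x)\exp(x^2/4)$, computes $g''(x) = \frac{1}{4}\exp(x^2/4)\left[(x^2-2)\sin(x) + 4x\cos(x)\right]$, shows the bracket is positive on $(0,\pi/2)$ by a second differentiation and a concavity argument, and then invokes Lemma \ref{lem_concanve_monotone} (for convex $g$ the ratio $g(x)/x$ is increasing) together with $\lim_{x\to 0} g(x)/x = 1$ to conclude $\sin(x)\exp(x^2/4) > x$. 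Your product-expansion argument buys a unified treatment of both inequalities and in fact the sharper hyperbolic bound $\sinh(x)/x < \exp(x^2/6)$, at the cost of importing the Euler products and the values $\zeta(2) = \pi^2/6$, $\zeta(4) = \pi^4/90$; the paper's argument stays within elementary single-variable calculus and reuses Lemma \ref{lem_concanve_monotone}, which it needs anyway for Lemmas \ref{lem_0048} and \ref{lem_attains_inf}, so it carries no extra machinery. One small imprecision in your closing remark: on $(0,\pi/2)$ you get $t_n \leq 1/4$, not merely $t_n \leq 1/2$, and your argument actually remains valid up to $x < \pi/\sqrt{2}$ (where $t_1 \leq 1/2$ becomes the binding constraint), so the hypothesis $x < \pi/2$ is sufficient for your proof rather than ``precisely'' what it requires.
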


\begin{proof}
The first inequality follows directly from the asymptotic expansion of the two functions. Indeed, we have that 
$$
\frac{\sinh(x)}{x} = \sum_{n=0}^{\infty}\frac{x^{2n}}{(2n+1)!}
\qquad \text{and} \qquad
\exp(x^2/4) = \sum_{n=0}^{\infty}\frac{x^{2n}}{4^{n} \cdot n!}.
$$
Since $(2n+1)! > 4^{n} \cdot n!$ for any $n \geq 1$ we see that first inequality holds true. In order to show the second inequality, define $ g(x) = \sin(x)\exp(x^2/4)$. Differentiating twice we obtain that 
$$
g''(x) = \frac{1}{4}\cdot \exp(x^2/4) 
\left[ (x^{2}-2)\sin(x) + 4x\cos(x)\right] .
$$
Let $h(x) = (x^{2}-2)\sin(x) + 4x\cos(x)$. Differentiating the function $h$ twice we obtain that $h''(x) = -(x^2+4)\sin(x)$ so that $h'' < 0$ on $(0,\pi/2)$, which implies that $h$ is strictly concave on $(0,\pi/2)$. Since $h(0) = 0$ and $h(\pi/2) > 0$, we have that $h > 0$ on $(0,\pi/2)$. Thus $g'' > 0$ on $(0,\pi/2)$, and by Lemma \ref{lem_concanve_monotone} we see that $f(x) = g(x)/x$ is strictly increasing. Since $f(0) = 1$ we have that $f(x) > 1$ for any $x \in (0,\pi/2)$, thus completing the proof. 
\end{proof}

\begin{lemma}
For any $a > 0$, the function $x/\tanh(ax)$ is increasing on $(0, \infty)$, and the function $x / \tan(ax)$ is decreasing on $(0, \pi/2a)$.
\label{lem_0048}
\end{lemma}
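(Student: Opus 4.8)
The plan is to reduce both assertions to the convexity/concavity criterion of Lemma \ref{lem_concanve_monotone} by passing to reciprocals. Since $\tanh(ax) > 0$ on $(0,\infty)$ and $\tan(ax) > 0$ on $(0, \pi/(2a))$, both functions $x/\tanh(ax)$ and $x/\tan(ax)$ are positive, and a positive function is strictly increasing precisely when its reciprocal is strictly decreasing. It therefore suffices to prove that $g_{1}(x) := \tanh(ax)/x$ is strictly decreasing on $(0,\infty)$ and that $g_{2}(x) := \tan(ax)/x$ is strictly increasing on $(0, \pi/(2a))$. Each of these is exactly a quotient of the form treated in Lemma \ref{lem_concanve_monotone}, so the task becomes checking a single sign of a second derivative.

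For the first statement I would show that $x \mapsto \tanh(ax)$ is strictly concave. A direct computation gives $\frac{d^{2}}{dx^{2}}\tanh(ax) = -2a^{2}\tanh(ax)\left(1-\tanh^{2}(ax)\right)$, which is strictly negative for every $x > 0$ and vanishes only at $x = 0$; hence $\tanh(ax)$ is strictly concave on $[0,b]$ for each $b > 0$. Since $\tanh(ax)/x$ extends continuously to the value $a$ at the origin, the concave case of Lemma \ref{lem_concanve_monotone} applies and shows $\tanh(ax)/x$ is strictly decreasing on every $[0,b]$, hence on all of $(0,\infty)$. Taking reciprocals yields that $x/\tanh(ax)$ is strictly increasing.

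For the second statement I would argue symmetrically, with convexity replacing concavity. Here $\frac{d^{2}}{dx^{2}}\tan(ax) = 2a^{2}\sec^{2}(ax)\tan(ax)$ is strictly positive on $(0, \pi/(2a))$, so $\tan(ax)$ is strictly convex on every $[0,b]$ with $b < \pi/(2a)$, and $\tan(ax)/x$ again extends continuously to $a$ at $x = 0$. The convex case of Lemma \ref{lem_concanve_monotone} then gives that $\tan(ax)/x$ is strictly increasing, and passing to reciprocals completes the proof.

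I do not anticipate a genuine obstacle: the whole content is the reciprocal reduction plus the sign of one second derivative in each case. The only points needing a little care are the \emph{strict} (rather than weak) concavity/convexity, which holds because the second derivative is nonzero except at the single endpoint $x = 0$, and the continuous extension of $g_{1}, g_{2}$ to the origin, which follows from $\lim_{x\to 0}\tanh(ax)/x = \lim_{x\to 0}\tan(ax)/x = a$. As an alternative one could sidestep Lemma \ref{lem_concanve_monotone} and differentiate directly: the logarithmic derivative of $x/\tanh(ax)$ equals $1/x - 2a/\sinh(2ax) \geq 0$ because $\sinh(u) \geq u$, while that of $x/\tan(ax)$ equals $1/x - 2a/\sin(2ax) \leq 0$ because $\sin(u) \leq u$ on $(0,\pi)$; however, the reciprocal approach is cleaner and reuses the machinery just developed in the appendix.
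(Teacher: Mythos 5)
Your proposal is correct and follows essentially the same route as the paper: both reduce the claim to Lemma \ref{lem_concanve_monotone} applied to $\tanh(ax)/x$ and $\tan(ax)/x$ and then pass to reciprocals, with your version merely making explicit the second-derivative computations and the continuous extension at the origin that the paper leaves implicit.
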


\begin{proof}
Let $a > 0$. The function $\tanh(ax)$ satisfies $\lim_{x \rightarrow 0} \tanh(ax) / x = a$, and it is strictly concave on $(0, \infty)$. Thus by Lemma \ref{lem_concanve_monotone} we see that $\tanh(ax)/x$ is strictly decreasing on $(0,\infty)$. Since $\tanh(ax) > 0$ on $(0,\infty)$ we obtain that $x / \tanh(ax)$ is increasing.
Similarly, the function $\tan(ax)$ satisfies $\lim_{x \rightarrow 0} \tan(ax)/x = a$, and it is strictly convex on $(0, \pi / 2a)$. 
Thus by Lemma \ref{lem_concanve_monotone} we see that $\tan(ax)/x$ is strictly increasing on $(0, \pi / 2a)$. 
Since $\tan(ax) > 0$ on $(0, \pi / 2a)$ we obtain that $x / \tan(x)$ is decreasing.
\end{proof}

\begin{lemma}
Let $0 < a < \pi/2$. Then the function $f: (0,\pi]\to \RR$ given by
$$
f(x) = \frac{\cos^{-1}(\cos^{2}(a)+\sin^{2}(a)\cos(x))}{x}
$$
is strictly decreasing on $(0,\pi)$, and in particular attains its infimum at $\pi$.
\label{lem_attains_inf}
\end{lemma}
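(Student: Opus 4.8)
The plan is to reduce the claim to Lemma \ref{lem_concanve_monotone} by writing $f(x) = \phi(x)/x$, where
$$
\phi(x) = \cos^{-1}\!\left( \cos^{2}(a) + \sin^{2}(a)\cos(x) \right),
$$
and then showing that $\phi$ is strictly concave on $[0,\pi]$. First I would record the boundary values $\phi(0) = 0$ and $\phi(\pi) = \cos^{-1}(\cos(2a)) = 2a$; the vanishing at the origin is exactly what makes $f = \phi / x$ a well-defined continuous function (with $f(0) = \phi'(0)$), as required to apply Lemma \ref{lem_concanve_monotone} on $[0,\pi]$. Granting strict concavity of $\phi$, that lemma immediately yields that $f$ is strictly decreasing on $(0,\pi)$, and by continuity it remains so up to $x = \pi$, so its infimum on $(0,\pi]$ is attained at $\pi$.

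The heart of the matter is the computation and simplification of $\phi'$, which is the step where I expect the main obstacle to lie. Writing $u(x) = \cos^{2}(a) + \sin^{2}(a)\cos(x)$, differentiation gives $\phi'(x) = \sin^{2}(a)\sin(x)/\sqrt{1-u^{2}}$, and the difficulty is to tame the radical $\sqrt{1-u^{2}}$. Using $1 = \cos^{2}(a) + \sin^{2}(a)$ together with the half-angle identities $1 - \cos(x) = 2\sin^{2}(x/2)$ and $1 + \cos(x) = 2\cos^{2}(x/2)$, I expect the factorization
$$
1 - u = 2\sin^{2}(a)\sin^{2}(x/2)
\qquad \text{and} \qquad
1 + u = 2\left[ \cos^{2}(a) + \sin^{2}(a)\cos^{2}(x/2) \right],
$$
so that after cancelling common factors and using $\sin(x) = 2\sin(x/2)\cos(x/2)$ the derivative collapses to the closed form
$$
\phi'(x) = \frac{\sin(a)\cos(x/2)}{\sqrt{\cos^{2}(a) + \sin^{2}(a)\cos^{2}(x/2)}}.
$$

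With this expression in hand, strict concavity of $\phi$ amounts to $\phi'$ being strictly decreasing on $(0,\pi)$, which I would verify without a second differentiation. Substituting $m = \cos(x/2)$ turns the right-hand side into $\Psi(m) = \sin(a)\,m / \sqrt{\cos^{2}(a) + \sin^{2}(a)\,m^{2}}$, and a one-line computation gives
$$
\Psi'(m) = \frac{\sin(a)\cos^{2}(a)}{\left( \cos^{2}(a) + \sin^{2}(a)\,m^{2} \right)^{3/2}} > 0,
$$
so $\Psi$ is strictly increasing in $m$. Since $m = \cos(x/2)$ is strictly decreasing in $x$ on $(0,\pi)$, the composition $\phi'(x) = \Psi(\cos(x/2))$ is strictly decreasing, which is the required strict concavity. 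As a consistency check, the endpoint values $f(0^{+}) = \phi'(0) = \sin(a)$ and $f(\pi) = 2a/\pi$ satisfy $\sin(a) > 2a/\pi$ for $0 < a < \pi/2$, in agreement with $f$ being decreasing.
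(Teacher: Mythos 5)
Your proposal is correct and takes essentially the same route as the paper: both prove strict concavity of the numerator $\phi(x) = \cos^{-1}\!\left(\cos^{2}(a)+\sin^{2}(a)\cos(x)\right)$ and then invoke Lemma \ref{lem_concanve_monotone}, using $\phi(0)=0$ for well-definedness of $\phi(x)/x$. The only difference is in how concavity is verified—the paper differentiates twice and exhibits $g''(x) = -4\sin^{4}(a)\cos^{2}(a)\sin^{4}(x/2)\big/\bigl(1-(\cos^{2}(a)+\sin^{2}(a)\cos(x))^{2}\bigr)^{3/2} < 0$ directly, while you stop at the first derivative, simplify it to a closed form, and show it is decreasing via the substitution $m=\cos(x/2)$; the two computations are consistent with each other.
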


\begin{proof}
Define $g:[0,\pi] \to \RR$ by
$ g(x) = \cos^{-1}(\cos^{2}(a)+\sin^{2}(a)\cos(x)) $.
Then
$$
g''(x) = - \frac{4\sin^{4}(a)\cos^{2}(a)\sin^{4}(x/2)}{(1-(\cos^{2}(a) + \sin^{2}(a)\cos(x))^{2})^{3/2}} < 0,
$$
so that $g$ is strictly concave on $(0,\pi)$.
Moreover, the function $f(x)=g(x)/x$ is well-defined and continuous on $[0,\pi]$. Hence by Lemma \ref{lem_concanve_monotone} we see that $f$ is strictly decreasing.
\end{proof}

\medskip
\noindent Department of Mathematics, Weizmann Institute of Science, Rehovot 76100, Israel. \\
{\it e-mail:} \verb"matan.eilat@weizmann.ac.il"


\begin{thebibliography}{99}
	
\bibitem{Ah} 
Ahlfors, L. V., 
{\it Conformal Invariants: Topics in geometric function theory.}
McGraw-Hill, New York (1973).
	
\bibitem{Alex} 
Alexander, S. B.,
Kapovitch, V., 
Petrunin, A., 
{\it Alexandrov geometry---foundations.} 
Graduate Studies in Mathematics, vol. 236, Amer. Math. Soc., Providence, RI, (2024).
	
\bibitem{Alek}
Aleksandrov, A. D., 
{\it  A theorem on triangles in a metric space and some of its applications.}  
Trudy Mat. Inst. Steklov., vol. 38, (1951), 5--23.	

\bibitem{BH} 
Bridson, M. R., 
Haefliger, A.,
{\it Metric spaces of non-positive curvature.}
Grundlehren der mathematischen Wissenschaften, 
vol. 319, 
Springer, Berlin, 1999.

\bibitem{DK}  
DeTurck, D. M., 
Kazdan, J. L., 
{\it Some regularity theorems in Riemannian geometry.} 
Ann. Sci. \'Ecole Norm. Sup. (4), vol. 14,  no. 3, (1981), 249--260.
	
\bibitem{Ga} 
Gamelin, T. W., 
{\it Complex Analysis.} 
Undergraduate Texts in Mathematics, Springer, New York, 2001.
	
\bibitem{Hay} 
Hayman, W. K., 
Kennedy, P. B., 
{\it Subharmonic functions. Vol. I.} 
London Mathematical Society Monographs, No. 9, Academic Press, London-New York, 1976.

\bibitem{Kel}
Kellogg, O. D., 
{\it Harmonic functions and Green’s integral.}
Trans. Amer. Math. Soc., vol. 13, no. 1, (1912), 109--132.

\bibitem{Mil}
Milnor, J. W., 
{\it A problem in cartography.}  
Amer. Math. Monthly, vol. 76, (1969), 1101--1112.

\bibitem{Mi}
Minding, F., {\it Wie sich entscheiden lässt, ob zwei gegebene krumme Flächen auf einander abwickelbar sind oder nicht; nebst Bemerkungen über die Flächen von unveränderlichem Krümmungsmaaße.}  
Journal fr die reine und angewandte Mathematik, vol. 1839, no. 19, Berlin (1839), 370--387.

\bibitem{Pai} 
Painlev\'{e}, P., 
{\it Sure les lignes singulares des functions analytiques.}
Thesis, Gauthier-Villars (1887).

\bibitem{Pet} 
Peterson, P., 
{\it Riemannian Geometry.}
Springer New York (2006).

\bibitem{Wh} 
Whitehead, J. H. C.,
{\it Convex regions in the geometry of paths.}
Quart. J. Math., vol. 3, no. 1, (1932), 33--42. 

\end{thebibliography}
\end{document}